\documentclass[10pt, reqno]{amsart}
\usepackage[colorlinks,citecolor=blue]{hyperref}
\usepackage{amsmath}
\usepackage{amscd}
\usepackage{amsthm}
\usepackage{amssymb}
\usepackage{latexsym}
\usepackage{euscript}
\usepackage{epsfig}
\usepackage{graphics}
\usepackage{array}
\usepackage{enumerate}
\usepackage{color}
\usepackage{wasysym}
\usepackage{graphicx}
\usepackage{amsfonts}
\usepackage{mathrsfs}
\usepackage{caption}
\newcommand{\Mod}[1]{\ (\text{mod}\ #1)}

\newcommand{\Hmm}[1]{\leavevmode{\marginpar{\tiny%
$\hbox to 0mm{\hspace*{-0.5mm}$\leftarrow$\hss}%
\vcenter{\vrule depth 0.1mm height 0.1mm width \the\marginparwidth}%
\hbox to 0mm{\hss$\rightarrow$\hspace*{-0.5mm}}$\\\relax\raggedright
#1}}}

\numberwithin{equation}{section}
\newtheorem{Thm}{Theorem}
\newtheorem{Rmk}{Remark}

\theoremstyle{definition}


\newcommand{\bel}[1]{\begin{equation}\label{#1}}

\newcommand{\be}{\begin{equation}}

\newcommand{\ba}{\begin{eqnarray}}
\newcommand{\ea}{\end{eqnarray}}

\newcommand{\qe}{\end{equation}}

\newtheorem{thesis}{Thesis}
\newcommand{\btl}[1]{\begin{thesis}\label{#1}}
\newcommand{\et}{\end{thesis}}

\theoremstyle{theorem}

\newtheorem{satz}{Proposition}
\theoremstyle{corollary}

\theoremstyle{lemma}
\newtheorem{lemma}{Lemma}
\theoremstyle{definition}
\newtheorem{defi}{Definition}
\theoremstyle{proof}

\theoremstyle{remark}

\date{}
\pagestyle{headings}

\begin{document}

\title[Signature, lifts and eigenvalues]{Signatures, lifts, and eigenvalues of graphs}

\author{Shiping Liu}
\address{Department of Mathematical Sciences, Durham University, DH1 3LE Durham, United Kingdom}
\email{shiping.liu@durham.ac.uk}

\author{Norbert Peyerimhoff}
\address{Department of Mathematical Sciences, Durham University, DH1 3LE Durham, United Kingdom}
\email{norbert.peyerimhoff@durham.ac.uk}

\author{Alina Vdovina}
\address{School of Mathematics and Statistics, Newcastle University,
NE1 7RU Newcastle-upon-Tyne, United Kingdom}
\email{alina.vdovina@ncl.ac.uk}

\begin{abstract}
We study the spectra of cyclic signatures of finite graphs and the corresponding cyclic lifts. Starting from a bipartite Ramanujan graph, we prove the existence of an infinite tower of $3$-cyclic lifts, each of which is again Ramanujan.
\end{abstract}

\maketitle
\section{Introduction}
Constructing infinite families of (optimal) expander graphs is a very challenging topic both in mathematics and computer science, which has received extensive attentions, see e.g. \cite{HLW06}. Bilu and Linial \cite{BL06} succeeded in constructing expander graphs by taking $2$-lift operations iteratively. In particular, they relate $2$-lifts of a base graph $G=(V, E)$ with signatures $s: E\rightarrow \{+1,-1\}$ on the set of edges $E$, and reduce the construction problem to finding a signature $s$ whose signed adjacency matrix $A^s$ has a small spectral radius. Furthermore, Bilu and Linial conjectured that every $d$-regular graph $G$ has a signature $s: E\rightarrow \{+1,-1\}$ such that all the eigenvalues of $A^s$ have absolute value at most the Ramanujan bound, $2\sqrt{d-1}$. In a recent breakthrough, Marcus, Spielman and Srivastava \cite{MSS,MSSICM} proved Bilu and Linial's conjecture for bipartite graphs affirmatively, by which they obtained an infinite family of bipartite Ramanujan graphs for every degree larger than $2$ via taking $2$-lift operations iteratively, starting with a complete bipartite graph.

In this note, by considering more general groups of signatures, especially cyclic groups, we prove that for each $i\in \{1,2,\ldots, k-1\}$ where $k\geq 2$, every $d$-regular graph $G$ has a $k$-cyclic signature $s$ such that the maximal eigenvalue of the $i$-th power of its $k$-cyclic signed adjacency matrix $A^{s,i}$ in the sense of Hadamard product is at most $2\sqrt{d-1}$ (see Theorem \ref{thm:main} in Section \ref{section:ramanujan} for the general case).
This generalizes Marcus, Spielman and Srivastava's result for $\{+1,-1\}$-signed adjacency matrices. In particular, this enables us to show that every bipartite Ramanujan graph $G$ can be used as the starting point of an infinite tower of $3$-cyclic lifts, $\cdots\rightarrow G_k\rightarrow G_{k-1}\rightarrow G_{k-2}\rightarrow\cdots\rightarrow G_1=G$, where each $G_i$ is again Ramanujan (Theorem \ref{thm:tower3} in Section \ref{section:ramanujan}).

Besides constructing expander graphs, the ideas around (general) signatures and lifts of graphs have been developed from various motivations, e.g. social psychology, Heawood map-coloring problem, matroid theory, mathematical physics. We defer a brief historical review about these interesting developments to Section \ref{section:history}.

We emphasize that the set of $3$-cyclic lifts is a restrictive class of $3$-lifts, which we like to explain briefly. Let $G=(V,E)$ be a finite graph. For any two vertices $u,v\in V$, we denote the corresponding edge by $\{u,v\}\in E$ if it exists. One can assign an orientation to it, say, directing from $u$ to $v$, in which case, we write $e=(u,v)$. The same edge with the opposite orientation is then written as $\bar e:=(v,u)$. The set of oriented edges is denoted by $E^{or}$. A $3$-cyclic signature is a map $s: E^{or}\to \{1,\xi, \overline{\xi}\}$, where $\xi=e^{2\pi i/3}\in \mathbb{C}$ and $\overline{\xi}$ is the conjugate of $\xi$, such that
\begin{equation}\label{eq:sigintro}
 s(\bar e)=\overline{s(e)}, \,\,\,\,\text{ for all } e=(u,v)\in E^{or}.
\end{equation}
For every oriented edge $e=(u,v)\in E^{or}$, the three possible values of $s(e)$ correspond to different local cyclic lifts, as shown in the following figures.

\begin{figure}[h]
\begin{minipage}[t]{0.3\linewidth}
\centering
\includegraphics[width=0.85\textwidth]{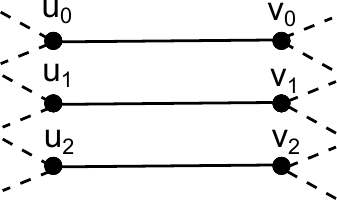}
\caption*{$s(e)=1$\label{L1}}
\end{minipage}
\hfill
\begin{minipage}[t]{0.3\linewidth}
\centering
\includegraphics[width=0.85\textwidth]{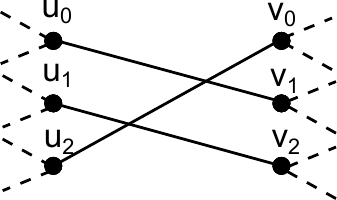}
\caption*{$s(e)=\xi$\label{L2}}
\end{minipage}
\hfill
\begin{minipage}[t]{0.3\linewidth}
\centering
\includegraphics[width=0.85\textwidth]{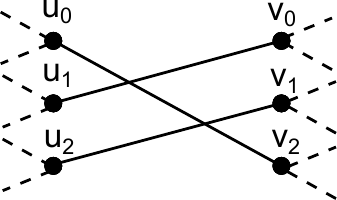}
\caption*{$s(e)=\overline{\xi}$\label{L2}}
\end{minipage}
\end{figure}

Let $A, A^s, \widehat{A}$ be the adjacency matrices of a graph $G$, its signature $s$, and the corresponding lift $\widehat{G}$, respectively. We will show that the eigenvalues $\sigma(\widehat{A})$ of $\widehat{A}$ satisfy (Lemma \ref{lemma:spectra of lifts})
\begin{equation}
  \sigma(\widehat{A})=\sigma(A)\sqcup\sigma(A^s)\sqcup\sigma(\overline{A^s}),
\end{equation}
where $\sqcup$ is the multiset union and $\overline{A^s}$ is the conjugate of $A^s$.

We prove the existence of our construction by applying the method of interlacing families in \cite{MSS} and mixed characteristic polynomials in \cite{MSS2} to our setting. The proof does not work for $k$-cyclic lifts, when $k\geq 4$. In this case, we have to find a signature $s$, such that all Hadamard powers of the associated signed adjacency matrix $A^s$ have simultaneously all their eigenvalues in the Ramanujan interval (see Lemma \ref{lemma:spectra of lifts}). For $k=3$, we only need to ensure that there is one signed adjacency matrix $A^s$ which satisfies this property. This is due to the fact that (\ref{eq:sigintro}) implies that $A^s$ is Hermitian and hence $\sigma(A^s)=\sigma(\overline{A^s})$. We like to mention that the $k$-cyclic lifts of a bipartite Ramanujan graph are a very special class and it is remarkable that such special lifts are sufficient to conclude that there are Ramanujan graphs in this class in the case $k=3$.

In this note, we consider the existence problem of cyclic signatures with particular spectral properties. For the special signature group $\{+1,-1\}$ more spectral theory of signed matrices can be found, e.g., in the survey paper \cite{ZaslavskyMatrices}. In a forthcoming paper \cite{LLPP}, we will extend results on Cheeger type constants and related spectral estimates, developed in \cite{AtayLiu14}, to the more general case of cyclic signatures.

\section{Basic notions and general framework}
Given a group $\Gamma$, which is usually finite, a general signature is defined as follows.
\begin{defi}\label{def:signature} A signature of $G=(V,E)$ is a map $s: E^{or} \to \Gamma$ satisfying
\begin{equation}\label{eq:keyforsig}
s(\bar e) = s(e)^{-1}, \,\,\,\, \text{ for all } e \in E^{or}.
\end{equation}
\end{defi}
For an oriented edge $e=(u,v)\in E^{or}$, we call $s(e)$ its signature, and write $s_{uv}$, alternatively. The signature of a cycle $C:=(u_1,u_2)(u_2,u_3)\cdots(u_{l-1},u_l)(u_l,u_1)$ is defined as the conjugacy class of the element
\begin{equation}
 s_{u_1u_2}s_{u_2u_3}\cdots s_{u_{l-1}u_{l}}s_{u_lu_1}\in \Gamma.
\end{equation}
\begin{defi}\label{def:balance}
 A signature $s$ of $G$ is called balanced if the signature of every cycle in $G$ is the identity element $\text{id}\in \Gamma$.
\end{defi}
Switching a signature $s$ by a function $\theta: V\rightarrow \Gamma$ means replacing $s$ by $s^{\theta}$, which is given by
\begin{equation}
 s^{\theta}(e):=\theta(u)s(e)\theta(v)^{-1},\,\,\,\,\text{ for all }e=(u,v)\in E^{or}.
\end{equation}
Two signatures $s$ and $s'$ of $G$ are called switching equivalent if there exists a function $\theta: V\rightarrow \Gamma$ such that $s'=s^{\theta}$. Switching equivalence between signatures is an equivalence relation. We denote the corresponding switching class of a signature $s$ by $[s]$.
Observe that being balanced is a switching invariant property.
\begin{satz}{\rm (\cite[Corollary 3.3]{Zaslavsky82})} A signature $s$ of $G$ is balanced if and only if it is switching equivalent to the signature $s_{\text{id}}$, where $s_{\text{id}}(e):=\text{id}$, for all $e\in E^{or}$.
\end{satz}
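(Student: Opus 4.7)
The forward-easy direction is immediate: the signature $s_{\mathrm{id}}$ is obviously balanced (every cycle's signature is $\mathrm{id}$), and the preceding paragraph has just observed that balance is a switching invariant, so any $s$ in the switching class $[s_{\mathrm{id}}]$ is balanced. The substance is in the converse.

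For the converse, assume $s$ is balanced. Without loss of generality we may reduce to the case $G$ connected, treating each connected component separately (and combining the resulting $\theta$'s into a single function on $V$). Choose a spanning tree $T$ of $G$ and fix a root $v_0 \in V$. The plan is to build $\theta\colon V \to \Gamma$ recursively along $T$: set $\theta(v_0) := \mathrm{id}$, and for every vertex $v \neq v_0$ let $v_0, v_1, \ldots, v_n = v$ be the unique path in $T$ from $v_0$ to $v$ and put
\[
\theta(v) := s_{v_0 v_1} s_{v_1 v_2} \cdots s_{v_{n-1} v_n}.
\]
Equivalently, $\theta$ is determined by the requirement $\theta(v) = \theta(u)\,s(e)$ for every oriented tree edge $e = (u,v) \in T$. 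A direct computation then gives $s^{\theta}(e) = \theta(u) s(e) \theta(v)^{-1} = \mathrm{id}$ for every oriented edge of $T$, and condition \eqref{eq:keyforsig} makes this consistent with the reversed orientation.

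The key step is to check $s^{\theta}(e) = \mathrm{id}$ for a non-tree edge $e = (u,v)$. Such an edge closes a unique cycle $C_e$ in $T \cup \{e\}$: concatenate the tree path $P_u$ from $v_0$ to $u$, the edge $e$, and the reversed tree path $\overline{P_v}$ from $v$ back to $v_0$. By construction of $\theta$, the product of signatures along $P_u$ equals $\theta(u)$, and the product along $\overline{P_v}$ equals $\theta(v)^{-1}$ (using \eqref{eq:keyforsig}). Hence the element representing the signature of the cycle $C_e$ based at $v_0$ is exactly
\[
\theta(u)\, s(e)\, \theta(v)^{-1} \;=\; s^{\theta}(e).
\]
Since $s$ is balanced, the conjugacy class of this element is $\{\mathrm{id}\}$; but $\{\mathrm{id}\}$ is a singleton conjugacy class in any group, so the element itself equals $\mathrm{id}$. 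Therefore $s^{\theta}(e) = \mathrm{id}$ on non-tree edges as well, and $s \sim s_{\mathrm{id}}$.

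The only real subtlety worth flagging is that "balanced" is stated via conjugacy classes rather than group elements, but this causes no trouble precisely because $\mathrm{id}$ alone is a conjugacy class, so triviality of the class forces triviality of the element; the rest is the standard spanning-tree gauge-fixing argument. No choice of basepoint or orientation on each cycle matters, since cyclic permutation and inversion of the cycle word both preserve the property of being equal to $\mathrm{id}$.
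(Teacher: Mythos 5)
Your proof is correct, and it is worth noting that the paper itself offers no proof of this proposition --- it is quoted from Zaslavsky \cite[Corollary 3.3]{Zaslavsky82} --- so there is nothing internal to compare against; what you have written is precisely the standard spanning-tree (gauge-fixing) argument that underlies Zaslavsky's result, and it is a sensible thing to supply. Both directions are handled properly: the switching invariance of balance disposes of one implication, and the recursive definition $\theta(v)=\theta(u)s(e)$ along tree edges, rooted at $\theta(v_0)=\mathrm{id}$, kills the signature on $T$; your observation that $\{\mathrm{id}\}$ is a singleton conjugacy class is exactly the point needed to pass from the paper's conjugacy-class definition of balance to an identity of group elements. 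One small imprecision to tidy up: for a non-tree edge $e=(u,v)$, the unique cycle in $T\cup\{e\}$ is $e$ together with the tree path between $u$ and $v$ (through their meet $w$ in $T$), not the closed walk $P_u$, $e$, $\overline{P_v}$ through the root, since $P_u$ and $P_v$ generally share an initial segment. This does not damage the argument: writing $\theta(u)=\theta(w)\alpha$ and $\theta(v)=\theta(w)\beta$ with $\alpha,\beta$ the products along the tree paths from $w$ to $u$ and $v$, one gets $s^{\theta}(e)=\theta(w)\bigl(\alpha\, s(e)\, \beta^{-1}\bigr)\theta(w)^{-1}$, and the inner factor is the signature element of the genuine fundamental cycle based at $w$, which is $\mathrm{id}$ by balance; conjugating by $\theta(w)$ then gives $s^{\theta}(e)=\mathrm{id}$ as you claim. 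So the conclusion stands, and your closing remark about basepoint independence is exactly the observation that repairs this step; I would just state it via the fundamental cycle rather than the closed walk through $v_0$.
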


Signatures have interesting connections with lifts of graphs. In particular, we consider the permutation signatures, i.e. the maps $s:E^{or} \to {\mathcal S}_k$, where ${\mathcal S}_k$ denotes the group of permutations of $\{1,2,\ldots,k\}$. The $k$-lift $\widehat{G}=(\widehat{V}, \widehat{E})$ of $G=(V,E)$ corresponding to the permutation signature $s:E^{or} \to {\mathcal S}_k$ is defined as follows: The vertex set $\widehat{V}$ is given by the Cartesian product $V\times \{1,2,\ldots,k\}$. For any $u\in V$, we call $\{u_i:=(u,i)\}_{i=1}^k\subseteq \widehat{V}$ the fiber over $u$. Every edge $(u,v)\in E^{or}$ gives rise to the $k$ edges $(u_i, v_{s_{uv}(i)})$, $i=1,2,\ldots,k$, in $\widehat{E}^{or}$.

\begin{Thm}{\rm (\cite[Theorems 1 and 2]{GrossTucker77} and \cite[Theorem 9.1]{Zaslavsky82})}\label{zaslCorr} Let $G$ be a finite graph. There is a $1$-to-$1$ correspondence between the isomorphism classes of $k$-lifts of $G$ and the switching classes of signatures of $G$ with values in ${\mathcal S}_k$.
\end{Thm}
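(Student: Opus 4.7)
The plan is to establish the correspondence by constructing explicit maps in both directions and checking compatibility with the appropriate equivalence relations. In one direction, the paper already prescribes how a permutation signature $s:E^{or}\to\mathcal{S}_k$ yields a $k$-lift $\widehat{G}(s)$; the anti-symmetry condition $s(\bar e)=s(e)^{-1}$ is exactly what guarantees that the fiber edges $(u_i,v_{s_{uv}(i)})$ form a well-defined undirected graph. In the opposite direction, starting from a $k$-lift $p:\widehat{G}\to G$, I would choose an arbitrary labeling $\psi_u: p^{-1}(u)\to \{1,\dots,k\}$ on each fiber. Then, for each oriented edge $e=(u,v)\in E^{or}$ and each $i\in\{1,\dots,k\}$, the lift property provides a unique vertex $\tilde v\in p^{-1}(v)$ adjacent to $\psi_u^{-1}(i)$ lying above $v$; setting $s_{uv}(i):=\psi_v(\tilde v)$ defines a permutation, and considering the reverse edge shows $s_{vu}=s_{uv}^{-1}$, so $s$ is a bona fide signature.

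Next I would verify that the equivalence relations match up. If $s'=s^\theta$ for some $\theta:V\to\mathcal{S}_k$, then relabeling each fiber of $\widehat{G}(s)$ via $u_i\mapsto u_{\theta(u)^{-1}(i)}$ gives a bijection of vertex sets whose image of the edge $(u_i,v_{s_{uv}(i)})$ is precisely an edge in $\widehat{G}(s^\theta)$; this produces a fiber-preserving isomorphism $\widehat{G}(s)\cong \widehat{G}(s^\theta)$. Conversely, starting from a lift $\widehat{G}$, two different choices of labelings $\psi_u,\psi_u'$ differ by permutations $\theta(u):=\psi_u'\circ\psi_u^{-1}\in\mathcal{S}_k$, and a direct computation with the definition of $s_{uv}$ shows the resulting signatures differ precisely by the switching operation with $\theta$ (up to the convention used in the paper). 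Similarly, any fiber-preserving isomorphism $\widehat{G}_1\cong\widehat{G}_2$ of two $k$-lifts can be absorbed into the labeling choice, so isomorphic lifts yield switching-equivalent signatures.

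Finally I would check that the two constructions are mutually inverse. Starting from $s$, building $\widehat{G}(s)$, and reading off the signature using the canonical labeling $(u,i)\mapsto i$ clearly recovers $s$ on the nose. Starting from a lift $\widehat{G}$, computing $s$ via labelings $\psi_u$, and then forming $\widehat{G}(s)$ yields a graph whose fiber edges match those of $\widehat{G}$ under $\psi_u^{-1}$, giving a fiber-preserving isomorphism.

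The main obstacle I expect is purely bookkeeping: one must be scrupulous about the convention for the switching action (left versus right multiplication, and the direction of $e=(u,v)$ in the formula $s^\theta(e)=\theta(u)s(e)\theta(v)^{-1}$) so that the relabeling function $\theta$ produced from re-choosing the $\psi_u$'s matches the paper's switching formula exactly, and not its inverse or its opposite. A secondary point worth spelling out is that ``isomorphism of $k$-lifts'' must be understood as isomorphism over $G$, i.e.\ fiber-preserving; without this convention the stated bijection fails, since two non-isomorphic $k$-lifts as covering spaces can happen to be isomorphic as abstract graphs.
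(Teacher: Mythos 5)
The paper does not actually prove Theorem \ref{zaslCorr}; it is quoted from Gross--Tucker and Zaslavsky, and your two-way construction (signature $\to$ lift as in the paper; lift $\to$ signature via fiber labelings $\psi_u$; switching $\leftrightarrow$ relabeling; fiber-preserving isomorphism absorbed into the labeling) is exactly the standard argument behind those citations, including your correct insistence that ``isomorphism of $k$-lifts'' must mean isomorphism over $G$.

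There is, however, one step that fails as written, and it is precisely the convention trap you flag but then wave away with ``up to the convention used in the paper''. For a fiber relabeling $\phi(u_i)=u_{\tau_u(i)}$ to carry $\widehat{G}(s)$ onto $\widehat{G}(s')$ one needs, for every oriented edge $(u,v)$, the intertwining identity $\tau_v\circ s_{uv}\circ \tau_u^{-1}=s'_{uv}$ (permutations composed as maps acting on the left). The compatible switching rule is therefore $s'_{uv}=\theta(v)\,s_{uv}\,\theta(u)^{-1}$, whereas the paper's displayed formula is the transposed one, $s^{\theta}_{uv}=\theta(u)\,s_{uv}\,\theta(v)^{-1}$. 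With the paper's formula taken literally, your map $u_i\mapsto u_{\theta(u)^{-1}(i)}$ would require $\theta(v)^{-1}s_{uv}\theta(u)=\theta(u)s_{uv}\theta(v)^{-1}$, which is false in general; worse, no fiber relabeling can repair it, because this ``wrong-sided'' switching can change the conjugacy class of the monodromy. Concretely, on a triangle $u,v,w$ with the trivial signature, switching by $\theta(u)=\alpha$, $\theta(v)=\beta$, $\theta(w)=\mathrm{id}$ under the paper's formula yields monodromy $\alpha^{-1}\beta\alpha\beta^{-1}$ around the triangle, so the switched lift is not a disjoint union of triangles while $\widehat{G}(s)$ is: the two lifts are not isomorphic even though the signatures are ``switching equivalent'' in that reading. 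The discrepancy is invisible in the paper's main (cyclic, hence abelian) setting, but for ${\mathcal S}_k$ it is exactly the point at issue. So in your proof you must fix the matched pair of conventions --- either use $s^{\theta}_{uv}=\theta(v)s_{uv}\theta(u)^{-1}$, or read ${\mathcal S}_k$ as acting on the right as Gross--Tucker do, or change the lift rule to $u_i\sim v_{s_{vu}(i)}$ --- and then verify the displayed intertwining identity both when you pass from switching to relabeling and, in the converse direction, when you show that re-choosing the labelings $\psi_u$ alters the extracted signature by precisely such a switching. With that identity pinned down, the remaining steps of your outline (mutual inverseness via the canonical labeling, and absorbing any fiber-preserving isomorphism into the choice of $\psi_u$) go through as you describe.
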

In particular, if two permutation signatures are switching equivalent, then the corresponding two $k$-lifts of $G$ are isomorphic. Observe that the $k$-lift of $G$ corresponding to a balanced permutation signature is composed of $k$ disjoint copies of $G$.

%
%

\section{Historical background}\label{section:history}
In 1953, Harary \cite{Harary53} introduced the concept of a signed graph, which is a graph $G=(V,E)$ with a signature $s:E\rightarrow \{+1,-1\}$, and the notion of balance (Definition \ref{def:balance}) in this setting. Harary was motivated by certain problems in social psychology, see also \cite{CartHarary56,Harary57,Harary59}. The switching equivalence of signatures was then described by the social psychologists Abelson and Rosenberg \cite{AR58}, and later discussed mathematically by Zaslavsky \cite{Zaslavsky82}.

Another source of the ideas around signatures and lifts is the Heawood map-coloring problem \cite{Heawood1890} asking for the chromatic number of a surface with positive genus, which is an extension of the famous four-color problem. The Heawood map-coloring problem is equivalent to finding the imbedding of every complete graph into a surface with the smallest possible genus \cite{Youngs67}. Gustin \cite{Gustin62} introduced the concept of a current graph in order to solve this imbedding problem, which was proved to be very important for the final solution due to Ringel and Youngs \cite{RingelYoungs68, Ringel74}. In the 1970s, Gross and Alpert \cite{GrossAlpert73, GrossAlpert74} developed Gustin's current graph theory into full topological generality and interpreted Gustin's method to construct an imbedding of a complete graph into a surface as a lift (or covering in topological terminology) of an imbedding of a smaller graph. Gross \cite{Gross74} further introduced the concept of a (reduced) voltage graph, which is a graph $G=(V,E)$ with a signature defined in Definition \ref{def:signature} (Gross called it a voltage assignment). A voltage graph can be considered as a dual graph of a current graph when both are imbedded into a certain surface. Gross associated to each signature $s: E^{or}\rightarrow \Gamma$ an $n$-lift of the graph $G$ with $n=|\Gamma|$, the order of $\Gamma$. Observe that voltage graphs are natural extensions of signed graphs of Harary.  One advantage of voltage graphs over current graphs is that their correspondence to lifts is independent of graph imbeddings, and the concept of a voltage graph makes the understanding of certain aspects of the solution of the Heawood map-coloring problem easier \cite{GrossTucker74}.

In \cite{GrossTucker77}, Gross and Tucker considered voltage graphs with $\Gamma=\mathcal{S}_k$ and established their correspondence to all $k$-lifts of $G$. In \cite{Gross74}, a cycle is called satisfying Kirchhoff's Voltage Law (KVL) if its signature is equal to the identity (compare with balance). Both KVL and its dual, the Kirchhoff's Current Law (KCL) in the current graph theory \cite{Gustin62}, play crucial roles in the corresponding lift and imbedding theory.

In 1982, motivated by a counting problem of the chambers of classical root systems, Zaslavsky \cite{Zaslavsky82} introduced the concepts of balance and switching equivalence of signatures into Gross and Tucker's theory on permutation signatures and lifts, and he formulated the explicit $1$-to-$1$ correspondence given in Theorem \ref{zaslCorr} above.

Connections between permutation signatures and lifts were also discussed by Amit and Linial \cite{AmitLinial02}, and they employed them to introduce a new model of random graphs. Friedman \cite{Friedman03} first used such a random model in the quest of finding larger Ramanujan graphs from smaller ones. This work stimulated an extensive study on the spectral theory of random $k$-lifts, see the recent work of Puder \cite{Puder14} and the references therein.

Agarwal, Kolla and Madan \cite[Section 1.1]{AKM13} pointed out that another motivation of considering permutation signatures and lifts is the famous Unique Game Conjecture of Khot. The permutations assigned to each oriented edge satisfying (\ref{eq:keyforsig}) appear naturally in the context of this conjecture.

We were led to consider general signatures and lifts by the notion of a discrete magnetic Laplacian studied in Sunada \cite{Sunada93} (see also Shubin \cite{Shubin94} and the references therein). This operator, originating from physics, is defined on a graph where every oriented edge has a signature in the unitary group $U(1)$ such that (\ref{eq:keyforsig}) holds. Sunada \cite{Sunada93} discussed switching equivalent signatures under a different terminology, cohomologous weight functions.

\section{Cyclic signature, lifts and adjacency matrices}
Let $S_k^1:=\{\xi^l \mid 0 \le l \le k-1\}$ be the cyclic group generated by the primitive $k$-th root of unity, $\xi = e^{2\pi i/k} \in {\mathbb C}$. We consider cyclic signatures, that is, maps $s: E^{or} \to S^1_k$. The corresponding signed adjacency matrix $A^{s}$ is a matrix with entries $(A^{s})_{uv}=s_{uv}$ if $\{u,v\}\in E$ and $0$ otherwise, where $u,v\in V$.
$A^s$ is Hermitian and has, therefore, only real eigenvalues with eigenvectors orthogonal w.r.t. the inner product $\langle a,b \rangle = \sum_{i=1}^k  a_i \bar b_i$.

The lift $\widehat{G}$ of $G$ corresponding to a $k$-cyclic signature is called
$k$-cyclic lift. In particular, every edge $(u,v)\in E^{or}$ with $s_{uv}=\xi^{l}$, for some $l\in \{0,1,\ldots, k-1\}$, gives rise to the following $k$ edges in $\widehat{G}$:
\begin{equation*}
(u_i, v_{i+l\Mod k}), i=0,1,\ldots, k-1.
\end{equation*}

The adjacency matrix $\widehat{A}$ of $\widehat{G}$ can be written as
\begin{equation}
\widehat{A}=\begin{pmatrix}
              A_0 & A_1 & A_2 & \cdots & A_{k-1} \\
              A_{k-1} & A_0 & A_1 & \cdots & A_{k-2} \\
              A_{k-2} & A_{k-1} & A_0 & \cdots & A_{k-3} \\
              \vdots & \vdots & \vdots & \ddots & \vdots \\
              A_1 & A_2 & A_3 & \cdots & A_0 \\
            \end{pmatrix},
\end{equation}
where $A_l$ is the adjacency matrix for the oriented edges $s^{-1}(\xi^l)$ and $A_l=A_{k-l}^T$. For $i\in \{0,1,2,\ldots, k-1\}$, let $A^{s,i}$ be the Hermitian matrix with entries
\begin{equation*}
(A^{s,i})_{uv}:=((A^s)_{uv})^i=(s_{uv})^i,
\end{equation*}
where $u,v\in V$. In particular, we have $A^{s,0}=A$, $A^{s,1}=A^s$. Observe that
\begin{equation}
A^{s,i}=\sum_{l=0}^{k-1}\xi^{il}A_l,
\end{equation}
and
\begin{equation}\label{equ:conjofhardamardpowers}
A^{s,i}=\overline{A^{s,k-i}}.
\end{equation}

\begin{lemma}\label{lemma:spectra of lifts}
The spectrum of $\widehat{A}$ is given by
\begin{equation}
\sigma(\widehat{A})=\bigsqcup_{l=0}^{k-1}\sigma(A^{s,i}),
\end{equation}
where the notion $\bigsqcup$ stands for the multiset union.
\end{lemma}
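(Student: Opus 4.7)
The plan is to exploit the block circulant structure of $\widehat{A}$: from the displayed form of $\widehat{A}$ in the excerpt, its $(i,j)$-block (indexing fibers by $i,j\in\{0,1,\dots,k-1\}$) is $A_{(j-i)\bmod k}$. Block circulant matrices are simultaneously block-diagonalized by the $k$-point discrete Fourier transform, and the $m$-th block on the diagonal will turn out to be exactly $\sum_{l=0}^{k-1}\xi^{lm}A_l=A^{s,m}$.

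Concretely, I would introduce, for each $m\in\{0,1,\dots,k-1\}$, the Fourier vector $v_m=(1,\xi^m,\xi^{2m},\dots,\xi^{(k-1)m})^{T}\in\mathbb{C}^{k}$, and consider the subspace $W_m:=v_m\otimes\mathbb{C}^{V}\subset\mathbb{C}^{\widehat V}$. Given $\phi\in\mathbb{C}^{V}$, set $\widehat x=v_m\otimes\phi$, i.e.\ $\widehat x_{(i,u)}=\xi^{im}\phi_u$. A direct computation gives
\begin{equation*}
(\widehat A\,\widehat x)_{(i,u)}=\sum_{j=0}^{k-1}\sum_{v\in V}(A_{(j-i)\bmod k})_{uv}\,\xi^{jm}\phi_v=\xi^{im}\sum_{l=0}^{k-1}\xi^{lm}(A_l\phi)_u=\xi^{im}(A^{s,m}\phi)_u,
\end{equation*}
where I substituted $l=(j-i)\bmod k$ and used $\xi^{jm}=\xi^{im}\xi^{lm}$. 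Hence $\widehat A(v_m\otimes\phi)=v_m\otimes(A^{s,m}\phi)$, so $W_m$ is $\widehat A$-invariant and the restriction of $\widehat A$ to $W_m$ is unitarily conjugate (via the isomorphism $\phi\mapsto v_m\otimes\phi$, normalized by $1/\sqrt{k}$) to $A^{s,m}$. Therefore every eigenvalue of $A^{s,m}$, with its multiplicity, occurs as an eigenvalue of $\widehat A\restriction_{W_m}$.

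To assemble the full spectrum I would use that the Fourier vectors $\{v_0,v_1,\dots,v_{k-1}\}$ form an orthogonal basis of $\mathbb{C}^{k}$, so $\mathbb{C}^{\widehat V}=\bigoplus_{m=0}^{k-1}W_m$ is an orthogonal decomposition into $\widehat A$-invariant subspaces. Summing the restricted spectra yields
\begin{equation*}
\sigma(\widehat A)=\bigsqcup_{m=0}^{k-1}\sigma(\widehat A\restriction_{W_m})=\bigsqcup_{m=0}^{k-1}\sigma(A^{s,m}),
\end{equation*}
as multisets, since the total eigenvalue count on each side is $k|V|$.

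There is no real obstacle here: the whole argument is a clean application of the Fourier diagonalization of block circulants. The one point I would be careful about is the direction of the cyclic shift (the ordering convention in the displayed $\widehat A$ matches $\widehat A_{(i,u),(j,v)}=(A_{(j-i)\bmod k})_{uv}$), because getting this sign wrong would flip the identification of the $m$-th block with $A^{s,m}$ versus its conjugate $A^{s,k-m}=\overline{A^{s,m}}$; by (\ref{equ:conjofhardamardpowers}) the two conventions only permute the multiset summands and the final statement is unaffected.
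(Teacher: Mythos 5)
Your proposal is correct and is essentially the paper's argument: the paper's lifted eigenvectors $\widehat{w}_i=(w_i,\xi^i w_i,\dots,\xi^{(k-1)i}w_i)$ are exactly your tensors $v_i\otimes w_i$, and its verification that $\widehat{A}\widehat{w}_i^T=\lambda\widehat{w}_i^T$ is the same Fourier-block computation you carry out on all of $W_m$. The only cosmetic difference is that you phrase completeness via the orthogonal invariant decomposition $\mathbb{C}^{\widehat V}=\bigoplus_m W_m$, whereas the paper counts $k|V|$ mutually orthogonal lifted eigenvectors; the two bookkeeping devices are equivalent.
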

\begin{Rmk}
 This is an extension of Bilu and Linial \cite[Lemma 3.1]{BL06}. We will call $\bigsqcup_{l=1}^{k-1}\sigma(A^{s,i})$ the {\em new eigenvalues} of the lift. Lemma \ref{lemma:spectra of lifts} was formulated in a slightly different form in \cite[Theorem 5]{AKM13}. For the reader's convenience, we present a proof here.
\end{Rmk}

\begin{proof}
For any $i\in \{0,1,\ldots, k-1\}$, let $w_i$ be an eigenvector of $A^{s,i}$ with eigenvalue $\lambda$, i.e., $A^{s,i}w_i^T=\lambda w_i^T$.
Set $\widehat{w}_i:=(w_i, \xi^iw_i, \xi^{2i}w_i, \ldots, \xi^{(k-1)i}w_i)$. We check that
\begin{equation}
\widehat{A}\widehat{w}_i^T=\begin{pmatrix}
                             \sum_{l=0}^{k-1}A_l\xi^{il}w_i^T \\
                             \sum_{l=0}^{k-1}A_l\xi^{i(l+1)}w_i^T \\
                             \vdots \\
                             \sum_{l=0}^{k-1}A_l\xi^{i(l+k-1)}w_i^T \\
                           \end{pmatrix}
=\begin{pmatrix}
   A^{s,i}w_i^T \\
   A^{s,i}\xi^iw_i^T \\
   \vdots \\
   A^{s,i}\xi^{i(k-1)}w_i^T \\
 \end{pmatrix}=\lambda\widehat{w}_i^T.
\end{equation}
Therefore, $\lambda$ is also an eigenvalue of $\widehat{A}$ with eigenvector $\widehat{w}_i$.

Moreover, we have for any two eigenvectors $w_i, w_j$ of $A^{s,i}, A^{s,j}$, respectively, where $i\neq j$,
\begin{equation}
\langle \widehat{w}_i, \widehat{w}_j \rangle = \langle w_i, w_j\rangle(1+\xi^{i-j}+\xi^{2(i-j)}+\cdots+\xi^{(k-1)(i-j)})=0.
\end{equation}
Note that the number of mutually orthogonal eigenvectors of $\{A^{s,i}\}_{i=0}^{k-1}$ is $k|V|$. Therefore, we have
$\sigma(\widehat{A})=\bigsqcup_{l=0}^{k-1}\sigma(A^{s,i})$.
\end{proof}
As a consequence of Lemma \ref{lemma:spectra of lifts} and (\ref{equ:conjofhardamardpowers}), most of the new eigenvalues have even multiplicity.

\begin{lemma}\label{lemma:symmetric}
Let $G$ be a finite bipartite graph. Then, for any $i\in\{0,1,\ldots,k-1\}$ and any $s:E^{or}\rightarrow S_k^1$, the spectrum $\sigma(A^{s,i})$ is symmetric w.r.t. zero.
\end{lemma}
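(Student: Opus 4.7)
The plan is to use the standard bipartite block decomposition trick, noting that it carries over unchanged to any Hadamard power $A^{s,i}$ because such powers have the same zero/nonzero pattern as the underlying adjacency matrix $A$.

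First I would fix a bipartition $V = V_1 \sqcup V_2$ of $G$ and order the vertices so that those in $V_1$ come before those in $V_2$. Since edges only connect $V_1$ to $V_2$, the matrix $A^{s,i}$ takes the block anti-diagonal form
\begin{equation*}
A^{s,i} = \begin{pmatrix} 0 & B \\ B^{*} & 0 \end{pmatrix},
\end{equation*}
where $B$ is the $|V_1| \times |V_2|$ matrix with entries $B_{uv} = (s_{uv})^i$ for $u \in V_1$, $v \in V_2$, and $B^*$ is its conjugate transpose (the Hermitian property of $A^{s,i}$ guarantees this, using $s(\bar e) = s(e)^{-1} = \overline{s(e)}$).

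Next I would introduce the involution $D = \mathrm{diag}(I_{|V_1|},\, -I_{|V_2|})$, which is both unitary and self-adjoint. A direct block computation gives $D\, A^{s,i}\, D = -A^{s,i}$, so $A^{s,i}$ is unitarily similar to its negative. Consequently, if $\lambda \in \sigma(A^{s,i})$ with eigenvector $\binom{x}{y}$, then $\binom{x}{-y}$ is an eigenvector for $-\lambda$, with the same multiplicity. This establishes that $\sigma(A^{s,i})$ is symmetric about $0$ as a multiset.

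There is no real obstacle here; the only point to verify carefully is that the Hadamard power $A^{s,i}$ preserves the bipartite block structure (which is immediate since entrywise exponentiation sends $0$ to $0$) and that it remains Hermitian, which was already noted in the text preceding the lemma. The argument is valid for every $i \in \{0,1,\ldots,k-1\}$, including $i=0$ where it reduces to the classical symmetry of the spectrum of the ordinary adjacency matrix of a bipartite graph.
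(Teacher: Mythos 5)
Your proof is correct and follows essentially the same route as the paper: both exploit the block anti-diagonal form of $A^{s,i}$ coming from the bipartition and flip the sign on the second block, the paper doing this directly via the eigenvector $(w_1,-w_2)^T$ and you packaging the same computation as conjugation by $D=\mathrm{diag}(I,-I)$. Your explicit remark that $D A^{s,i} D=-A^{s,i}$ preserves multiplicities is a slightly cleaner way to state the multiset symmetry, but it is not a different argument.
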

\begin{proof}
 First observe that for every $A^{s,i}$, there exists two square matrix $A_1$, $A_2$ such that
\begin{equation}
 A^{s,i}=\begin{pmatrix}
           0 & A_1 \\
           A_2 & 0 \\
         \end{pmatrix}.
\end{equation}
Furthermore, $A^{s,i}$ is Hermitian and has only real eigenvalues. Let $\lambda$ be an eigenvalue of $A^{s,i}$ with eigenvector $w:=(w_1,w_2)^T$. Then we have \begin{equation}
A_1w_2=\lambda w_1,\,\,\,\,A_2w_1=\lambda w_2,
\end{equation}
and we can check directly that $-\lambda$ is an eigenvalue of $A^{s,i}$ with the eigenvector $(w_1,-w_2)^T$.
\end{proof}

The following lemma is an extension of the corresponding result for signatures $s: E^{or}\rightarrow \{+1,-1\}$ in \cite[Proposition II.3]{ZaslavskyMatrices}.

\begin{lemma}\label{lemma:spectralswitching}
Let $s$ and $s'$ be switching equivalent. Then for each $i\in\{0,1,\ldots,k-1\}$, the matrices $A^{s,i}$ and $A^{s',i}$ are unitary equivalent, and hence have the same spectrum.
\end{lemma}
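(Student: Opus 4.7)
The plan is to exhibit an explicit unitary matrix $U_i$ realizing the equivalence $A^{s',i} = U_i A^{s,i} U_i^*$, built directly from the switching function $\theta$.

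First, I would translate the hypothesis to the level of matrix entries. By definition, $s'=s^\theta$ for some $\theta:V\to S_k^1$, i.e.\ $s'_{uv}=\theta(u)s_{uv}\theta(v)^{-1}$ for every $(u,v)\in E^{or}$. Since $S_k^1$ is abelian and consists of $k$-th roots of unity, taking the $i$-th Hadamard power yields
\begin{equation*}
(s'_{uv})^i = \theta(u)^i\,(s_{uv})^i\,\theta(v)^{-i}.
\end{equation*}
This identity holds whenever $\{u,v\}\in E$, and both sides vanish otherwise, so it is the entrywise statement I need.

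Next, I would introduce the diagonal matrix $D_\theta\in\mathbb{C}^{V\times V}$ defined by $(D_\theta)_{uu}=\theta(u)$ and zero off the diagonal. Since $|\theta(u)|=1$, $D_\theta$ is unitary, and so is every power $D_\theta^i$, with $(D_\theta^i)^{-1}=(D_\theta^i)^*=D_{\bar\theta}^{\,i}$. The entrywise identity above can then be rewritten as the matrix identity
\begin{equation*}
A^{s',i} \;=\; D_\theta^{\,i}\, A^{s,i}\, (D_\theta^{\,i})^*,
\end{equation*}
which shows that $A^{s,i}$ and $A^{s',i}$ are unitarily equivalent via $U_i:=D_\theta^{\,i}$. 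Equality of spectra (including multiplicities) is then immediate.

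There is no real obstacle here; the only subtlety is that the argument uses in an essential way that $\Gamma=S_k^1$ is abelian, so that conjugation by $\theta(u),\theta(v)$ commutes cleanly with raising $s_{uv}$ to the $i$-th power. For a non-abelian signature group one could not pull the $i$-th power through the conjugation, which is why the statement is phrased specifically for cyclic signatures.
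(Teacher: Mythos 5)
Your proof is correct and follows essentially the same route as the paper: the paper's conjugating matrix $D^i(\theta)$, with diagonal entries $\theta(u)^i$, is exactly your $U_i = D_\theta^{\,i}$, and the identity $A^{s',i}=D^i(\theta)A^{s,i}\overline{D^i(\theta)}$ is your $A^{s',i}=D_\theta^{\,i}A^{s,i}(D_\theta^{\,i})^*$. Your explicit entrywise verification and the remark on commutativity merely spell out the check the paper leaves to the reader.
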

\begin{proof}
 Let $\theta: V\rightarrow S_k^1$ be the function such that $s'_{uv}=\theta(u)s_{uv}\overline{\theta(v)}$, for all $e=(u,v)\in E^{or}$.
Set $D^i(\theta)$ be the diagonal matrix with entries $(D^i(\theta))_{uu}=\theta(u)^i$, where $u\in V$. We can check that
\begin{equation}
 A^{s',i}=D^i(\theta)A^{s,i}\overline{D^i(\theta)}.
\end{equation}
\end{proof}

A set of $i$ edges is called an $i$-matching if no two of them share a common vertex. If $m_i$ denotes the number of $i$-matchings in $G$, then the matching polynomial of $G$ is defined as (see \cite{GG1981})
\begin{equation}
\mu_G(x):=\sum_{i=0}^{\lfloor \frac{N}{2} \rfloor}(-1)^im_ix^{n-2i}.
\end{equation}

Now we consider the signature $s$ as a random variable with the following properties. The signature of $(u,v)\in E^{or}$ and its inverse $(v,u)$ are chosen independently from the other oriented edges. The signature $s_{uv}$ is chosen uniformly from $S_k^1$ and this choice determines the value of  $s_{vu}=\overline{s_{uv}}$, as well.
We have the following proposition, extending a result of Godsil and Gutman \cite[Corollary 2.2]{GG1981} (see also \cite{MSS}).
\begin{satz}\label{lemma:matching poly}
For any $i\in \{1,2,\ldots, k-1\}$, the expectation of the characteristic polynomial of $A^{s,i}$ satisfies
\begin{equation}
\mathbb{E}_s (\det(xI-A^{s,i}))=\mu_G(x).
\end{equation}
\end{satz}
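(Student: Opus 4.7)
The plan is to expand the characteristic polynomial via the permutation formula for the determinant and then take the expectation term by term. Concretely, write
$$\det(xI - A^{s,i}) = \sum_{\sigma \in \mathcal{S}_n} \mathrm{sgn}(\sigma) \prod_{u \in V} (xI - A^{s,i})_{u, \sigma(u)},$$
where $n = |V|$, and decompose each $\sigma$ into disjoint cycles. A fixed point $u$ contributes the factor $x$ (since $G$ has no loops, so $(A^{s,i})_{u,u} = 0$), while a cycle $(u_1\, u_2\, \cdots\, u_l)$ of length $l \ge 2$ contributes $(-1)^l \prod_{j=1}^{l} s_{u_j u_{j+1}}^i$ (with indices taken mod $l$), provided every unordered pair $\{u_j,u_{j+1}\}$ is an edge of $G$, and $0$ otherwise.

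Next I would compute the expectation cycle by cycle, exploiting that the signatures on distinct unordered edges are independent and that, for every oriented edge and every $i \in \{1,\ldots,k-1\}$,
$$\mathbb{E}[s_{uv}^i] \;=\; \frac{1}{k}\sum_{l=0}^{k-1}\xi^{il} \;=\; 0.$$
For a cycle of length $l \ge 3$, the $l$ vertices involved are distinct, hence so are the $l$ unordered edges traversed, and the signatures around the cycle are therefore independent mean-zero random variables whose product has expectation $0$. For a 2-cycle $(u\, v)$, by contrast, the two oriented edges $(u,v)$ and $(v,u)$ represent the \emph{same} unordered edge, and the contribution is the deterministic quantity $s_{uv}^i \cdot \overline{s_{uv}}^i = 1$ whenever $\{u,v\} \in E$ (and vanishes otherwise). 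Consequently, only permutations whose cycle decomposition consists solely of fixed points together with $2$-cycles lying in $E$ can contribute nonzero expectations.

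Finally, such permutations are in bijection with matchings in $G$: a permutation with exactly $m$ nontrivial cycles corresponds to an $m$-matching and has $n - 2m$ fixed points. The sign $\mathrm{sgn}(\sigma) = (-1)^m$, combined with the $(-1)^2 = 1$ arising from each $2$-cycle factor, shows that such a permutation contributes $(-1)^m x^{n-2m}$ to the expectation. Summing over all matchings,
$$\mathbb{E}_s\,\det(xI - A^{s,i}) \;=\; \sum_{m=0}^{\lfloor n/2 \rfloor} (-1)^m m_m\, x^{n-2m} \;=\; \mu_G(x),$$
which is the required identity.

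The proof has no serious obstacle; the entire weight of the argument rests on the mean-zero computation for $i \in \{1,\ldots,k-1\}$. This is precisely where the hypothesis on $i$ is used, and it also explains why the statement fails at $i=0$: in that case $A^{s,0}=A$ is deterministic and the long-cycle contributions no longer average out. A minor technical point to handle carefully is checking that, on a cycle of length $l\ge 3$ in $\sigma$, the unordered edges traversed really are pairwise distinct—this is immediate because the vertices of a permutation cycle are pairwise distinct, so no two consecutive oriented edges can coincide as unordered edges, and nonconsecutive ones share no vertex at all.
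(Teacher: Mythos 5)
Your proposal is correct and follows essentially the same route as the paper: expand $\det(xI-A^{s,i})$ by the permutation formula, use $\mathbb{E}[s_{uv}^i]=0$ and $\mathbb{E}[s_{uv}^i\overline{s_{uv}}^{\,i}]=1$ together with independence across distinct unordered edges to kill every permutation except those whose nontrivial cycles are $2$-cycles along edges, and identify the surviving terms with the matching polynomial. Your treatment of the cycle decomposition and the independence of edges along a cycle of length at least $3$ is in fact slightly more explicit than the paper's, but the argument is the same.
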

\begin{proof}
We denote by $\text{Sym}(S)$ the set of permutations of a set $S$, and by $[N]$ the set $\{1,2,\ldots, N\}$. Let $(-1)^{|\eta|}$ denote the signature of a permutation $\eta\in \text{Sym}([N])$. For $l\in \{0,1,2,\ldots, N\}$, we define a subset $P_l$ of $\text{Sym}([N])$ to be
\begin{equation*}
P_l:=\{\eta\in \text{Sym([N])}: \text{the number of indices } i\in [N] \text{ s. t. } \eta(i)\neq i \text{ is equal to } l\}.
\end{equation*}
Next, we calculate the characteristic polynomials of $A^{s,i}$:
\begin{align*}
&\det(xI-A^{s,i})=\sum_{\eta\in \text{Sym}([N])}(-1)^{|\eta|}\prod_{j=1}^N(xI-A^{s,i})_{j,\eta(j)}\\
=&\sum_{l=0}^N\sum_{\eta\in P_l}(-1)^{|\eta|}x^{N-l}\prod_{\substack{j=1\\\eta(j)\neq j}}^{N}(-A^{s,i})_{j,\eta(j)}\\
=&\sum_{l=0}^Nx^{N-l}\sum_{\substack{S\subseteq [N]\\|S|=l}}\sum_{\substack{\pi\in \text{Sym}(S)\\\pi(i)\neq i\, \forall i\in S}}(-1)^{|\pi|}\prod_{j\in S}(-A^{s,i})_{j,\pi(j)}.
\end{align*}
Observe that
\begin{equation}
\mathbb{E}_s((-A^{s,i})_{j,\pi(j)})=-\frac{1}{k}\sum_{l=0}^{k-1}\xi^{il}=0,
\end{equation}
and
\begin{equation}
\mathbb{E}_s((-A^{s,i})_{j,\pi(j)}(-A^{s,i})_{\pi(j),j})=\frac{1}{k}\sum_{l=0}^{k-1}\xi^{il}\overline{\xi^{il}}=1.
\end{equation}
Hence, we obtain
\begin{align*}
\mathbb{E}_s(\det(xI-A^{s,i}))=&\sum_{l=0}^{N}x^{N-l}\sum_{\substack{S\subseteq [N]\\|S|=l, l \text{ even}}}
\sum_{\substack{\pi \in \text{Sym}(S)\\ \pi(i)\neq i, \pi^2(i)=i\, \forall i\in S}}(-1)^{\frac{l}{2}}\\
=&\mu_G(x).
\end{align*}
\end{proof}
Heilmann and Lieb \cite{HeilmannLieb72} proved that for every graph $G$, $\mu_G(x)$ has only real roots and all these roots have absolute value at most $2\sqrt{d-1}$, where $d$ is the maximal vertex degree of $G$. A refinement in the irregular case was proved by Godsil \cite{Godsil81} leading to the following result presented in \cite[Lemma 3.5]{MSS}.
\begin{satz}\label{lemma:Mss}
Let $T$ be the universal cover of the graph $G$. Then the roots of $\mu_G(x)$ are bounded in absolute value by the spectral radius $\rho(T)$ of $T$.
\end{satz}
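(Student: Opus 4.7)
The plan is to use Godsil's path tree construction. Fix any vertex $v \in V$ and let $T(G,v)$ denote the finite tree whose vertices are the simple paths in $G$ starting at $v$ (including the trivial one-vertex path $v$), with two such paths joined by an edge precisely when one is obtained from the other by appending a single edge of $G$. I denote the trivial path by $r$, the root of this tree. The strategy has three stages: (i) relate $\mu_G$ to $\mu_{T(G,v)}$, (ii) pass from the matching polynomial of the tree to its spectrum, (iii) control that spectrum by $\rho(T)$.

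First, I would establish Godsil's divisibility $\mu_G(x) \mid \mu_{T(G,v)}(x)$. The standard route is to prove the stronger rational identity
\[
\frac{\mu_{G \setminus v}(x)}{\mu_G(x)} \;=\; \frac{\mu_{T(G,v) \setminus r}(x)}{\mu_{T(G,v)}(x)}
\]
by induction on $|V|$, using the vertex-deletion recursion
\[
\mu_H(x) \;=\; x\,\mu_{H \setminus u}(x) \;-\; \sum_{w \sim u}\mu_{H \setminus \{u,w\}}(x)
\]
applied at $v$ in $G$ and at $r$ in $T(G,v)$ simultaneously. The key structural observation that powers the induction is that for each neighbour $u$ of $v$ in $G$, the branch of $T(G,v) \setminus r$ attached to the path $(v,u)$ is canonically isomorphic to the path tree $T(G \setminus v, u)$; this aligns the two recursions factor by factor.

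Next, since $T(G,v)$ is a tree, its matching polynomial coincides with its characteristic polynomial. Hence, by the first step, every root of $\mu_G(x)$ is an eigenvalue of the adjacency matrix of $T(G,v)$, and is therefore real and bounded in absolute value by the spectral radius of that finite tree. To move from $T(G,v)$ to $T$, I embed the former as a subtree of the latter: fixing a lift $\tilde v \in T$ of $v$, every simple path in $G$ starting at $v$ lifts uniquely to a non-backtracking walk in $T$ starting at $\tilde v$, and this assignment is an injective graph homomorphism whose image is a finite induced subtree of $T$ isomorphic to $T(G,v)$. The elementary monotonicity principle, that the operator norm of the adjacency matrix of a finite induced subgraph of a locally finite graph is dominated by the operator norm of the ambient adjacency operator (extend eigenvectors by zero and apply Rayleigh), then yields the desired bound $|\lambda| \le \rho(T)$ for every root $\lambda$ of $\mu_G(x)$.

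The main obstacle is the divisibility step: matching up the recursions for $\mu_G$ and $\mu_{T(G,v)}$ is the nontrivial content of Godsil's theorem and the place where the path tree construction earns its keep. The passage from the matching polynomial of a tree to its characteristic polynomial and the monotonicity of spectral radius under taking finite subgraphs are both standard and pose no real difficulty once the divisibility is in hand.
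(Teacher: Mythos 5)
Your proposal is correct, but note that the paper does not prove this proposition at all: it imports it from Godsil \cite{Godsil81}, as presented in \cite[Lemma 3.5]{MSS}, after quoting the Heilmann--Lieb bound. What you have written is essentially a faithful reconstruction of that cited argument: Godsil's path-tree divisibility $\mu_G(x)\mid \mu_{T(G,v)}(x)$, the identification of a tree's matching polynomial with its characteristic polynomial, and the embedding of the path tree as an induced subtree of the universal cover together with Rayleigh-quotient monotonicity of the spectral radius. One small point to tighten when writing it out: the rational identity relating $\mu_{G\setminus v}/\mu_G$ to $\mu_{T(G,v)\setminus r}/\mu_{T(G,v)}$ does not by itself yield divisibility; you also need the inductive step that $\mu_{G\setminus v}$ divides $\mu_{T(G\setminus v,u)}$ for a neighbour $u$ of $v$, combined with the product decomposition of $\mu_{T(G,v)\setminus r}$ over the branches you describe --- exactly the structural observation you already isolate, so the plan goes through as intended.
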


\section{Ramanujan properties}\label{section:ramanujan}

The following theorem is a generalization of \cite[Theorem 5.3]{MSS}.

\begin{Thm}\label{thm:main} Let $G =(V,E)$ be a finite connected graph.
Then for any $i\in \{1,2,\ldots, k-1\}$, there exists a cyclic signature $s^i_0: E^{or} \to S^1_k$ such that
\begin{equation}\label{eq:inmainthm}
\lambda_{max}(A^{s^i_0,i}) \le \rho(T),
\end{equation}
that is, all the eigenvalues of $A^{s^i_0,i}$ are at most the spectral radius $\rho(T)$ of the universal covering tree $T$ of $G$.
\end{Thm}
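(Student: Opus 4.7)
Fix $i \in \{1,2,\ldots,k-1\}$. The plan is to apply the method of interlacing families from \cite{MSS} combined with the mixed characteristic polynomial machinery from \cite{MSS2}. Consider the random cyclic signature $s$ described just before Proposition \ref{lemma:matching poly}: for each unoriented edge, the value of $s$ on one chosen orientation is drawn uniformly and independently from $S_k^1$. By Proposition \ref{lemma:matching poly},
\[
\mathbb{E}_s\bigl[\det(xI - A^{s,i})\bigr] = \mu_G(x),
\]
and by Proposition \ref{lemma:Mss} every root of $\mu_G$ lies in $[-\rho(T),\rho(T)]$. It therefore suffices to exhibit a single deterministic signature $s_0^i$ whose characteristic polynomial has maximum root no greater than that of $\mu_G$.

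I would build an interlacing family indexed by partial signatures. Enumerate the unoriented edges as $e_1,\ldots,e_m$ and, for each tuple $(j_1,\ldots,j_t) \in \{0,1,\ldots,k-1\}^t$ with $0 \le t \le m$, define
\[
p_{j_1,\ldots,j_t}(x) := \mathbb{E}_s\bigl[\det(xI - A^{s,i}) \,\big|\, s_{e_l} = \xi^{j_l},\ l=1,\ldots,t\bigr].
\]
These polynomials form a rooted $k$-ary tree of depth $m$: the root $p_{\emptyset}$ equals $\mu_G$, while the leaves are precisely the $k^m$ characteristic polynomials $\det(xI - A^{s,i})$. By the interlacing-family theorem of \cite{MSS}, it is enough to verify that at every internal node the $k$ children admit a common interlacer, equivalently that every nonnegative linear combination of the $k$ polynomials obtained by fixing one additional edge signature is real-rooted. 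Once this is done, the theorem produces a leaf whose maximum root is bounded by the maximum root of $\mu_G$, which in turn is bounded by $\rho(T)$.

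The main technical obstacle is this common-interlacer step. An edge $e=(u,v)$ with $s_{uv}=\xi^j$ contributes the Hermitian rank-two matrix $\xi^{ij} e_u e_v^\top + \xi^{-ij} e_v e_u^\top$ to $A^{s,i}$. For $k=2$ the two choices differ only by a sign and the claim reduces to the rank-one argument of \cite{MSS}; for $k\ge 3$ one has a genuinely $k$-valued random rank-two Hermitian update at each edge. Following \cite{MSS2}, I would introduce an auxiliary indeterminate $z_e$ per edge, encode each $k$-point average as the action of a differential operator acting on a product of linear factors in $z_e$, and express the conditional expected characteristic polynomial as a substitution in a multivariate real-stable polynomial of the form $\det\bigl(xI - \sum_{e} M_e(z_e)\bigr)$. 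The Hermitian real-stability preservation results of \cite{MSS2} (closure under substitution of real values for some $z_e$, and under the averaging operators used to marginalize the remaining ones) then imply that every $p_{j_1,\ldots,j_t}$ is real-rooted, and so is every nonnegative combination of siblings. Checking that the specific rank-two updates arising from cyclic signatures fit into this real-stability framework is the delicate part of the argument, and also the reason why the same approach fails beyond $k=3$ in the lift application: one would then need the real-stability step to hold simultaneously for all Hadamard powers $A^{s,i}$ with $1\le i\le k-1$ sharing the same $s$, which is a strictly stronger demand than the single-$i$ statement proved here.

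Combining the interlacing-family conclusion with Proposition \ref{lemma:Mss} produces a signature $s_0^i$ with $\lambda_{\max}(A^{s_0^i,i}) \le \rho(T)$, as required.
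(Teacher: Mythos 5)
Your overall strategy is the same as the paper's: reduce to Propositions \ref{lemma:matching poly} and \ref{lemma:Mss}, organize the $k^m$ characteristic polynomials $f^{s,i}=\det(xI-A^{s,i})$ into an interlacing family indexed by partial signatures, and invoke the Marcus--Spielman--Srivastava theorem (Theorem \ref{thm:Mss}) together with the real-stability machinery of \cite{MSS2} to get the common-interlacer property via real-rootedness of conditional expectations. However, you stop exactly at the point where the actual work lies, and the fallback you sketch does not go through as stated. You treat the contribution of an edge $e=(u,v)$ with $s_{uv}=\xi^{j}$ as an irreducibly rank-two Hermitian update $\xi^{ij}e_ue_v^{\ast}+\xi^{-ij}e_ve_u^{\ast}$ and propose to feed $\det\bigl(xI-\sum_e M_e(z_e)\bigr)$ into the stability-preservation lemmas of \cite{MSS2}. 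But those lemmas (in particular \cite[Proposition 3.6]{MSS2}) require the coefficient matrices of the $z_e$ to be positive semi-definite; for indefinite Hermitian updates the determinantal polynomial need not be real stable, so ``checking that the rank-two updates fit into the framework'' is not a routine verification — it is precisely the missing idea.

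The paper resolves this with a rank-one decomposition that you do not supply: choose $\alpha_j^i$ with $(\alpha_j^i)^2=(s_j)^i$ and set $r_j^i=(0,\ldots,\alpha_j^i,\ldots,\overline{\alpha_j^i},\ldots,0)^T$; then $r_j^i(r_j^i)^{\ast}$ is a PSD rank-one matrix whose off-diagonal part is exactly the edge contribution to $A^{s,i}$ and whose diagonal part, summed over all edges, is the degree matrix $D$, so that $A^{s,i}=\sum_j r_j^i(r_j^i)^{\ast}-D$. With this in hand, each conditional expectation in your tree is (up to the deterministic shift by $D$) a mixed characteristic polynomial of the PSD matrices $\mathbb{E}\bigl(r_j^i(r_j^i)^{\ast}\bigr)$, and real-rootedness follows from \cite[Corollary 4.4]{MSS2} in the regular case; in the irregular case one additionally introduces an auxiliary variable $x'$ multiplying $D$ and uses real stability in $(x,x',z_1,\ldots,z_m)$ before substituting $x'=1$, a step your sketch also omits. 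Your closing comment about why the method does not extend to $k\geq 4$ concerns the lift application (Theorem \ref{thm:tower3}), not Theorem \ref{thm:main} itself, and the correct reason there is that one would need a single signature controlling all Hadamard powers $A^{s,i}$, $1\le i\le k-1$, simultaneously — which is consistent with what you say, but does not affect the gap above.
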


\begin{Rmk}
Note that by Lemma \ref{lemma:spectralswitching}, all the signatures in the switching class $[s_0^i]$ fulfill (\ref{eq:inmainthm}).
\end{Rmk}

For each $i\in \{1,2,\ldots, k-1\}$, we consider the following family of characteristic polynomials:
\begin{equation}\label{family:polynomials}
\{f^{s,i}:=\det(xI-A^{s,i})\mid s: E^{or}\rightarrow S_k^1\}.
\end{equation}
By Propositions \ref{lemma:matching poly} and \ref{lemma:Mss}, it is enough to prove the following property of (\ref{family:polynomials}):
\begin{align}
\begin{split}
&\text{there exists one polynomial of (\ref{family:polynomials}) whose largest root is no greater}\\
&\text{than the largest root of the sum of all polynomials in (\ref{family:polynomials}).}\label{property:key}
\end{split}
\end{align}
However, this property can not hold for an arbitrary family of polynomials. We apply the method of \emph{interlacing families}, developed by Marcus, Spielman and Srivastava \cite{MSS, MSS2, MSSICM} to prove property (\ref{property:key}).

First observe that for every signature $s: E^{or}\to S_k^1$, $f^{s,i}$ is a real-rooted degree $N$ polynomial with leading coefficient one. Let $\lambda_1(f^{s,i})\leq\lambda_2(f^{s,i})\leq \cdots\leq \lambda_N(f^{s,i})$ be the $N$ roots of $f^{s,i}$. If there exists a sequence of real numbers $\alpha_1\leq\alpha_2\leq\cdots\leq \alpha_{N-1}$ such that
\begin{equation}
\lambda_1(f^{s,i})\leq\alpha_1\leq\lambda_2(f^{s,i})\leq\alpha_2\leq \cdots\leq \alpha_{N-1}\leq \lambda_N(f^{s,i}) \,\,\,\,\,\forall s: E^{or}\rightarrow S_k^1,
\end{equation}
then we say that $\{f^{s,i}\}_s$ has a {\em common interlacing}. If the family of polynomials (\ref{family:polynomials}) could be proved to have a common interlacing, then property (\ref{property:key}) would hold by Lemma 4.2 in \cite{MSS}.

A systematic way to establish the existence of a common interlacing is given in the following lemma (see, e.g., \cite[Lemma 4.5]{MSS}).
\begin{lemma}\label{lemma:proveinterlacing}
Let $g^1, g^2, \ldots, g^l$ be polynomials of the same degree with positive leading coefficients. Then $g^1, g^2, \ldots, g^l$ have a common interlacing if and only if $\sum_{i=1}^lp_ig^i$ is real-rooted for all convex combinations, $p_i\geq 0$, $\sum_{i=1}^lp_i=1$.
\end{lemma}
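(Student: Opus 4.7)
The plan is to handle the two directions of the equivalence separately; the forward implication is a direct sign and intermediate-value-theorem argument, while the reverse direction reduces to Obreschkoff's classical theorem on pairs of real-rooted polynomials followed by a short Helly-type step to pass from pairs to the full family.

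For the forward direction, I would set $\alpha_0:=-\infty$ and $\alpha_N:=+\infty$ and note that for each $i$ the polynomial $g^i$ has positive leading coefficient, degree $N$, and exactly one root in each open interval $(\alpha_{j-1},\alpha_j)$; consequently $\mathrm{sign}(g^i(\alpha_j))=(-1)^{N-j}$ independently of $i$. Any convex combination $\sum_i p_i g^i$ then inherits the same sign at each $\alpha_j$, so the intermediate value theorem provides at least one real root in each of the $N$ intervals $(\alpha_{j-1},\alpha_j)$; since the combination has degree $N$, these exhaust all of its roots.

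For the reverse direction, I would first reduce to pairs. For each $a,b$, the one-parameter family $h_t:=(1-t)g^a+tg^b$, $t\in[0,1]$, consists of degree $N$ real-rooted polynomials by hypothesis, since the leading coefficient remains positive throughout the segment. Obreschkoff's theorem, proved via continuity of roots (Hurwitz's theorem) along this segment, then implies that $g^a$ and $g^b$ admit a pairwise common interlacer, which is equivalent to the inequalities
\[
\lambda_j(g^a)\le\lambda_{j+1}(g^b)\quad\text{and}\quad\lambda_j(g^b)\le\lambda_{j+1}(g^a)\quad\text{for every }j.
\]
To promote these pairwise inequalities to a global common interlacer I would set
\[
\alpha_j:=\max_{1\le i\le l}\lambda_j(g^i),\qquad j=1,\ldots,N-1.
\]
Taking the maximum over $i$ in the pairwise inequality gives $\alpha_j\le\lambda_{j+1}(g^{i'})$ for every $i'$, while $\lambda_j(g^{i'})\le\alpha_j$ is immediate by definition; the $\alpha_j$ are automatically nondecreasing in $j$, so $\alpha_1\le\cdots\le\alpha_{N-1}$ forms a common interlacer.

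The main obstacle is the pairwise Obreschkoff step; the rest is essentially bookkeeping. A clean derivation of it requires invoking Hurwitz's theorem to produce continuous real root labellings $\lambda_j(h_t)$ along the segment, and then using this continuity, together with the fact that the degree of $h_t$ is preserved, to rule out the configurations that would violate pairwise common interlacing of the endpoints.
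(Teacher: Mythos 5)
The paper does not actually prove this lemma --- it quotes it from \cite[Lemma 4.5]{MSS}, which in turn refers to classical sources (Fell, Chudnovsky--Seymour) --- so your proposal has to be judged against the standard arguments. Your global architecture is reasonable, and the pairwise-to-global step is correct: $\alpha_j:=\max_i\lambda_j(g^i)$ does give a common interlacer once every pair has one, exactly as you argue. The genuine gap is that the heart of the matter, the two-polynomial case, is not proved but delegated to ``Obreschkoff's theorem'', and that theorem does not do the job. The Hermite--Kakeya--Obreschkoff theorem assumes real-rootedness of \emph{all} real linear combinations $\lambda f+\mu g$ and concludes that the roots of $f$ and $g$ \emph{mutually} interlace; here you only know real-rootedness of \emph{convex} combinations, and the conclusion you need is the weaker existence of a common interlacer. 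The example $f=x^2-1$, $g=x^2-4$ shows the mismatch: every convex combination $x^2-(1+3t)$ is real-rooted and a common interlacer exists ($\alpha_1=0$), yet $2f-g=x^2+2$ is not real-rooted and the roots $\{\pm1\},\{\pm2\}$ do not alternate, so HKO neither applies nor has the right conclusion. What you need for pairs is precisely Fell's theorem, i.e.\ the $l=2$ case of the lemma you are proving, so invoking it without proof is close to circular; and the continuity sketch you offer skips the actual difficulty: if $g^a(c)$ and $g^b(c)$ have opposite signs there is a $t^*$ with $h_{t^*}(c)=0$, and $c$ may be a multiple root of $h_{t^*}$, so the number of roots of $h_t$ below the test point $c$ can jump by more than one at $t^*$; excluding the bad configuration $\lambda_{j+1}(g^b)<c<\lambda_j(g^a)$ is exactly where the work lies. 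If your intent is to cite the pair case as known, cite Fell or Chudnovsky--Seymour (not Obreschkoff); that, combined with your max construction, is a legitimate route and no less rigorous than the paper's own citation.

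There is also a smaller, fixable gap in the forward direction. Your claims that each $g^i$ has exactly one root in each open interval $(\alpha_{j-1},\alpha_j)$ and that $\mathrm{sign}\bigl(g^i(\alpha_j)\bigr)=(-1)^{N-j}$ require the interlacing to be strict, whereas the definition used in the paper allows $\lambda_j(g^i)=\alpha_j$ (and repeated roots); then $g^i(\alpha_j)$ can vanish (e.g.\ $g^i=(x-\alpha_1)^2$ with $N=2$), some open intervals contain no root, and the weak-sign version of the argument is not sufficient on its own: a degree-$3$ polynomial with $g(\alpha_1)\ge 0\ge g(\alpha_2)$ need not be real-rooted when $\alpha_1=\alpha_2$ (take $(x-\alpha_1)(x^2+1)$). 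The standard repair is a perturbation: move the roots of each $g^i$ and the $\alpha_j$ slightly so that the common interlacing becomes strict, run your sign/IVT argument there, and let the perturbation tend to zero, using that a coefficient-wise limit of real-rooted degree-$N$ polynomials with leading coefficients bounded away from zero is real-rooted. This should be said explicitly, since the characteristic polynomials $f^{s,i}$ to which the lemma is applied do have repeated roots in general.
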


In fact, in order to prove (\ref{property:key}), we do not prove that the polynomials $\{f^{s,i}\}_s$ have a common interlacing but that they form an \emph{interlacing family} introduced by Marcus, Spielman and Srivastava, for which we only need to consider special convex combinations of $\{f^{s,i}\}_s$ instead of all.

\begin{defi}[Interlacing families \cite{MSS}]\label{def:interlacing}
Let $S_1, \ldots, S_m$ be finite index sets and for every assignment $(s_1, \ldots, s_m)\in S_1\times S_2\times\cdots\times S_m$, let $g^{s_1,\ldots,s_m}(x)$ be a real-rooted degree $N$ polynomial with positive leading coefficient. For a partial assignment $(s_1,\ldots, s_q)\in S_1\times\cdots\times S_l$ with $1\leq q<m$, we define
\begin{equation*}
g^{s_1,\ldots, s_q}:=\sum_{s_{q+1}\in S_{q+1},\ldots, s_m\in S_m}g^{s_1,\ldots,s_q,s_{q+1},\ldots,s_m},
\end{equation*}
and
\begin{equation*}
g^{\emptyset}:=\sum_{s_{1}\in S_{1},\ldots, s_m\in S_m}g^{s_1,\ldots,s_m}.
\end{equation*}
The family of polynomials $\{g^{s_1,\ldots,s_m}\}_{s_1,\ldots, s_m}$ is called an \emph{interlacing family} if, for all $q\in \{0,1,\ldots,m-1\}$ and all given parameters $s_1\in S_1, \ldots, s_q\in S_q$, the family of polynomials
\begin{equation*}
\{g^{s_1,\ldots,s_q,t}\}_{t\in S_{q+1}}
\end{equation*}
has a common interlacing.
\end{defi}
Marcus, Spielman and Srivastava \cite[Theorem 4.4]{MSS} proved the following theorem.
\begin{Thm}\label{thm:Mss} Let $S_1,\ldots, S_m$ be finite index sets and let $\{g^{s_1,\ldots,s_m}\}_{s_1,\ldots, s_m}$ be an interlacing family of polynomials. Then there exists $(s_1, \ldots, s_m)\in S_1\times\cdots\times S_m$ such that the largest root of $g^{s_1,\ldots, s_m}$ is no greater than the largest root of $g^{\emptyset}$.
\end{Thm}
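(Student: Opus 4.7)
The plan is to prove Theorem \ref{thm:Mss} by a greedy induction on $m$, with the inductive step resting on the following single-level fact: \emph{if $g^1,\ldots,g^l$ are degree-$N$ real-rooted polynomials with positive leading coefficient sharing a common interlacing $\alpha_1\le \cdots\le \alpha_{N-1}$, and $g=\sum_{i=1}^l p_i g^i$ for some $p_i\ge 0$ not all zero, then there is an index $i$ with $\lambda_N(g^i)\le \lambda_N(g)$.}

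To prove this sub-lemma I would argue on the half-line $[\alpha_{N-1},\infty)$. The common interlacing gives $\lambda_{N-1}(g^i)\le \alpha_{N-1}\le \lambda_N(g^i)$ for every $i$, so each $g^i$ has exactly one root on this half-line; combined with the positive leading coefficient, $g^i$ is strictly negative on $[\alpha_{N-1},\lambda_N(g^i))$ and strictly positive on $(\lambda_N(g^i),\infty)$. Hence, for $x\ge \alpha_{N-1}$, one has $g^i(x)\ge 0$ if and only if $x\ge \lambda_N(g^i)$. By the easy direction of Lemma \ref{lemma:proveinterlacing}, $g$ is itself real-rooted and is interlaced by the same sequence $\alpha_1\le\cdots\le\alpha_{N-1}$, so in particular $\lambda_N(g)\ge \alpha_{N-1}$. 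Setting $x_0:=\lambda_N(g)$, the identity $0=g(x_0)=\sum_i p_i g^i(x_0)$ with $p_i\ge 0$ forces some $g^i(x_0)\ge 0$; for that $i$, the dichotomy above yields $\lambda_N(g^i)\le x_0 = \lambda_N(g)$.

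With the sub-lemma in hand, the theorem follows by descent. At level $q=0$, the hypothesis that $\{g^{s_1,\ldots,s_m}\}$ is an interlacing family says that $\{g^t\}_{t\in S_1}$ has a common interlacing, and this family sums to $g^{\emptyset}$; the sub-lemma produces $s_1\in S_1$ with $\lambda_N(g^{s_1})\le \lambda_N(g^{\emptyset})$. Having chosen $(s_1,\ldots,s_q)$, the family $\{g^{s_1,\ldots,s_q,t}\}_{t\in S_{q+1}}$ has a common interlacing by the definition of an interlacing family and sums to $g^{s_1,\ldots,s_q}$; applying the sub-lemma produces $s_{q+1}\in S_{q+1}$ with $\lambda_N(g^{s_1,\ldots,s_{q+1}})\le \lambda_N(g^{s_1,\ldots,s_q})$. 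Chaining these inequalities over $m$ steps yields $\lambda_N(g^{s_1,\ldots,s_m})\le \lambda_N(g^{\emptyset})$, which is the desired conclusion.

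The substance of the proof lies entirely in the sub-lemma; the induction is pure bookkeeping. The step I expect to need the most care is the sign analysis on $[\alpha_{N-1},\infty)$, where I must justify that each $g^i$, and hence $g$, has exactly one root on this half-line and a single sign change there. Both facts rest on the positive leading coefficient together with the common interlacing hypothesis; once they are secured, the pigeonhole on the signs of the values $g^i(x_0)$ closes the argument cleanly.
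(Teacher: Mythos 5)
First, note that the paper does not actually prove Theorem \ref{thm:Mss}: it is quoted from Marcus--Spielman--Srivastava \cite[Theorem 4.4]{MSS}. Your proposal reconstructs exactly their route: a one-level lemma (their Lemma 4.2) saying that among polynomials with a common interlacing, some member has its largest root at most the largest root of any nonnegative combination, followed by a greedy descent through the levels $S_1,\ldots,S_m$ using the definition of an interlacing family. So in spirit your argument is the standard one, and the descent step is fine. Two cosmetic points: Lemma \ref{lemma:proveinterlacing} applies after normalizing the weights to a convex combination, and it only gives real-rootedness of $g$, not that the same $\alpha_j$ interlace $g$; but you only need $\lambda_N(g)\ge\alpha_{N-1}$, which follows directly from $g(\alpha_{N-1})=\sum_i p_i g^i(\alpha_{N-1})\le 0$ together with $g\to+\infty$.

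There is, however, one genuine flaw in the sub-lemma as written: the dichotomy ``for $x\ge\alpha_{N-1}$, $g^i(x)\ge 0$ iff $x\ge\lambda_N(g^i)$'' is false at the boundary point $x=\alpha_{N-1}$ when $\alpha_{N-1}$ coincides with $\lambda_{N-1}(g^i)$. Concretely, take $N=2$, $g^1(x)=x(x-1)$, $g^2(x)=x(x+1)$, common interlacing $\alpha_1=0$, and $p_1=p_2=\tfrac12$, so $g(x)=x^2$ and $x_0=\lambda_2(g)=0$; then $g^1(x_0)=0\ge 0$ while $\lambda_2(g^1)=1>x_0$, so your pigeonhole could select an index for which the conclusion fails (here the correct index is $i=2$). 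The repair is standard and small: argue by contradiction just to the right of $x_0$. If every $i$ with $p_i>0$ had $\lambda_N(g^i)>x_0$, then on the nonempty interval $(x_0,\min_i\lambda_N(g^i))$ each such $g^i$ is strictly negative, since it has no roots there (its $N-1$ smallest roots are $\le\alpha_{N-1}\le x_0$) and its largest root is simple in this situation; hence $g<0$ on that interval, contradicting $g>0$ to the right of its largest root $x_0$. With this adjustment your sub-lemma, and hence the whole proof, is correct.
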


In order to prove property (\ref{property:key}) using Theorem \ref{thm:Mss}, we still need to prove the following proposition.
\begin{satz}\label{lemma:interlacing}
For each $i\in\{1,2,\ldots, k-1\}$, the family of polynomials $\{f^{s,i}\mid s: E^{or}\rightarrow S_k^1\}$ is an interlacing family.
\end{satz}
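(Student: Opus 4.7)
The plan is to follow the interlacing-family strategy of Marcus, Spielman, and Srivastava \cite{MSS, MSS2}. Enumerate the unoriented edges as $E = \{e_1, \ldots, e_m\}$, fix an orientation on each, and take $S_j := S_k^1$ for every $j$. For each $(s_1, \ldots, s_m) \in \prod_j S_j$, identify it with the signature $s$ defined by $s(e_j) = s_j$ and set $g^{s_1, \ldots, s_m}(x) := f^{s,i}(x)$. The partial sums $g^{s_1, \ldots, s_q}(x)$ then equal, up to the positive scalar $k^{m-q}$, the conditional expectation $\mathbb{E}_s[\det(xI - A^{s,i}) \mid s(e_j) = s_j,\ j \leq q]$ under the uniform product measure on the remaining edges. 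By Lemma \ref{lemma:proveinterlacing}, the interlacing condition of Definition \ref{def:interlacing} reduces to showing, for every partial assignment and every probability vector $(p_0, \ldots, p_{k-1})$, real-rootedness of the convex combination
\[
h_p(x) := \sum_{l=0}^{k-1} p_l\, g^{s_1, \ldots, s_q, \xi^l}(x).
\]

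Setting $M := \sum_{j \leq q} B_{e_j}^{s_j}$, $N := M + \sum_{j > q+1} B_{e_j}^{s_j}$, and $e_{q+1} = \{u,v\}$, where $B_e^\zeta$ denotes the rank-two Hermitian matrix having $\zeta^i$ at the $(u,v)$-entry and $\bar\zeta^i$ at the $(v,u)$-entry, a rank-two Sylvester expansion yields, for any Hermitian $N$,
\[
\det(xI - N - B_e^\zeta) = \det(xI - N) - \det\bigl((xI - N)_{\widehat{uv}}\bigr) - \zeta^i\, C_{vu}(x) - \bar\zeta^i\, C_{uv}(x),
\]
where $(xI - N)_{\widehat{uv}}$ denotes the principal submatrix with rows and columns $u, v$ removed and $C_{uv}, C_{vu}$ are the corresponding cofactors of $xI - N$. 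Set $\hat p_i := \sum_l p_l\, \xi^{li}$, which lies in the closed complex unit disk by convexity, and let $B^{\hat p_i}_{e_{q+1}}$ denote the rank-two Hermitian matrix with $\hat p_i$ at the $(u,v)$-entry. Combining the expansion above for $\zeta \in S_k^1$ with its analogue for $B^{\hat p_i}_{e_{q+1}}$ yields the key identity
\[
\sum_{l=0}^{k-1} p_l\, \det\bigl(xI - N - B_{e_{q+1}}^{\xi^l}\bigr) = \det\bigl(xI - N - B^{\hat p_i}_{e_{q+1}}\bigr) - \bigl(1 - |\hat p_i|^2\bigr)\, \det\bigl((xI - N)_{\widehat{uv}}\bigr).
\]
Taking the expectation over uniform signatures on $e_{q+2}, \ldots, e_m$ and iterating the matching-polynomial recursion $\mathbb{E}_\zeta[\det(xI - N - B_e^\zeta)] = \det(xI - N) - \det((xI - N)_{\widehat{uv}})$ (which holds for uniform $\zeta \in S_k^1$ since $\sum_{l=0}^{k-1} \xi^{li} = 0$ for $1 \leq i \leq k-1$) expresses $h_p(x)$ as $P_1(x; \hat p_i) - (1 - |\hat p_i|^2)\, P_2(x)$, where $P_1(x; z)$ and $P_2(x)$ are generalized matching polynomials of the subgraph $\{e_{q+2}, \ldots, e_m\}$ (respectively its restriction to $V \setminus \{u, v\}$) with Hermitian vertex potentials $M + B^z_{e_{q+1}}$ and $M_{\widehat{uv}}$, respectively.

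The final step is to establish real-rootedness of $h_p(x) = P_1(x; \hat p_i) - (1 - |\hat p_i|^2)\, P_2(x)$ uniformly in $\hat p_i$ over the closed unit disk; this is the principal obstacle. Individual real-rootedness of $P_1, P_2$ follows from the Heilmann--Lieb theorem \cite{HeilmannLieb72} and its Godsil-style extension \cite{Godsil81} to matching polynomials with Hermitian vertex potentials, but the affine combination requires $P_1(x; \hat p_i)$ and $P_2(x)$ to admit a common interlacer simultaneously for all disk parameters. I plan to handle this by constructing an auxiliary bivariate polynomial $\Phi(x, w)$ with $h_p(x) = \Phi(x, \hat p_i)$ and verifying its real-stability, adapting the mixed characteristic polynomial machinery of \cite{MSS2}; the essential input is that the matrices $B_e^\zeta$ all share the same spectrum $\{+1, -1\}$ and have eigenspaces related by a unitary diagonal conjugation, which allows the cyclic-signature computation to be reduced to a multivariate weighted matching polynomial in the spirit of \cite{MSS}. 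Once real-rootedness of $h_p$ is established, Lemma \ref{lemma:proveinterlacing} supplies the common interlacer required by Definition \ref{def:interlacing}, completing the verification that $\{f^{s,i}\}_s$ is an interlacing family.
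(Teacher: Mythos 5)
Your reduction of the interlacing-family condition to real-rootedness of the convex combinations $h_p$, and your rank-two expansion identity
\begin{equation*}
\sum_{l=0}^{k-1} p_l\, \det\bigl(xI - N - B_{e_{q+1}}^{\xi^l}\bigr) = \det\bigl(xI - N - B^{\hat p_i}_{e_{q+1}}\bigr) - \bigl(1 - |\hat p_i|^2\bigr)\, \det\bigl((xI - N)_{\widehat{uv}}\bigr),
\end{equation*}
are both correct, but the proof stops exactly where the real difficulty begins. The real-rootedness of $P_1(x;\hat p_i) - (1-|\hat p_i|^2)P_2(x)$, uniformly over the closed disk, is not established: you state that you ``plan to handle this by constructing an auxiliary bivariate polynomial $\Phi(x,w)$ and verifying its real-stability,'' which is a programme, not an argument. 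Moreover, the ingredients you invoke do not plug in as stated. The edge matrices $B_e^{\zeta}$ are rank-two and indefinite (spectrum $\{+1,-1\}$), so the mixed characteristic polynomial machinery of Marcus--Spielman--Srivastava, which requires the random summands to be rank-one (their expectations may then be arbitrary positive semi-definite matrices), does not apply to them directly; and the ``Godsil-style extension of the Heilmann--Lieb theorem to matching polynomials with Hermitian vertex potentials'' is not a result you can cite --- even the individual real-rootedness of $P_1(x;\hat p_i)$ (an average of Hermitian characteristic polynomials) already needs the interlacing/stability machinery and is not automatic. Real stability is also a condition on real variables, while $\hat p_i$ is a complex disk parameter, so the bivariate reduction needs care that the sketch does not supply.

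The paper's proof resolves precisely this obstacle by a different and more economical decomposition: it adds the degree diagonal $D$ and writes $A^{s,i} + D = \sum_{j=1}^m r_j^i (r_j^i)^{\ast}$, where $r_j^i$ is supported on the two endpoints of $e_j$ with entries $\alpha_j^i$ and $\overline{\alpha_j^i}$ satisfying $(\alpha_j^i)^2 = (s_j)^i$. Each edge then contributes an independent random \emph{rank-one positive semi-definite} matrix, so every product-form convex combination of the $f^{s,i}$ is (up to the diagonal shift) a mixed characteristic polynomial of positive semi-definite matrices and is real-rooted by \cite[Corollary 4.4]{MSS2}; the irregular case is handled by the real stability of $\det(xI + x'D + \sum_j z_j A_j^i)$ together with the closure properties in \cite{MSS2}. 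If you want to complete your route, the cleanest fix is to abandon the rank-two $B_e^{\zeta}$ and adopt this rank-one square-root trick, which converts the entire real-rootedness question into a statement already proved in \cite{MSS2}.
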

\begin{proof} For notational convenience, let $e_1,\ldots, e_m$ be all the oriented edges in $E^{or}$ and $s_1,\ldots, s_m$ their associated signatures, respectively. Then we can write the family of polynomials of this proposition as
\begin{equation*}
\{f^{s,i}\}_{s=(s_1,\ldots,s_m)\in (S_k^1)^m}.
\end{equation*}
Let $p_1^l,\ldots, p_m^l$, $l=0,1,\ldots, k-1$ be nonnegative real numbers satisfying
\begin{equation}\label{eq:convexity coeff}
\sum_{l=0}^{k-1}p_j^l=1, \,\,\,\,\text{for } j=1,2,\ldots, m.
\end{equation}
In order to prove this proposition, it is sufficient to prove that the following polynomial is real-rooted for all possible choices of $\{p_j^l\}$ satisfying (\ref{eq:convexity coeff}),
\begin{equation}\label{eq:partial convex comb}
\sum_{s=(s_1,\ldots,s_m)\in(S_k^1)^m}\left(\prod_{j=1}^mp_j^{l(s_j)}\right)f^{s,i}(x),
\end{equation}
where $l(s_j)\in \{0,1,\ldots, k-1\}$ satisfies $s_j=\xi^{l(s_j)}$.
In fact, if this real-rootedness is true, for each $q\in \{0,\ldots, m-1\}$ and fixed $s_1\in S_k^1,\ldots, s_q\in S_k^1$, we can apply Lemma \ref{lemma:proveinterlacing} to (\ref{eq:partial convex comb}) with
\begin{align*}
&p_{q+1}^l\geq 0,\,\,\text{for }l=0,1,\ldots,k-1,\,\,\sum_{l=0}^{k-1}p_{q+1}^l=1;\\
&p_{q+2}^l=\cdots=p_{m}^l=\frac{1}{k},\,\,\,\text{for } l=0,1,\ldots,k-1;\\
&p_j^l=\left\{
         \begin{array}{ll}
           1, & \hbox{if $s_j=\xi^l$;} \\
           0, & \hbox{otherwise,}
         \end{array}\,\,\,\,\text{for }j=1,2,\ldots,q,
       \right.
\end{align*}
to conclude that $\{f^{(s_1,\ldots,s_q,t),i}\}_{t\in S_k^1}$ has a common interlacing and hence Proposition \ref{lemma:interlacing} holds by Definition \ref{def:interlacing}.

Now we start to prove the real-rootedness of the polynomial (\ref{eq:partial convex comb}).
Observe that the matrix $A^{s,i}$ can be written as follows:
\begin{equation}
 A^{s,i}=\sum_{j=1}^mr_j^i\cdot (r_j^i)^{\ast}-D.
\end{equation}
In the above equation, we use the following notations: $D$ is the diagonal matrix with $D_{uu}=d_u$, for each $u\in V$; $r_j^i\in\mathbb{C}^N$ is a column vector associated to the signature $s_j$ of the oriented edge $e_j$. If $e_j=(u,v)$ for $u,v\in V$, we have
\begin{equation}
 r_j^i:=(0,\ldots,0, \alpha_j^i,0,\ldots,0,\overline{\alpha_j^i},0,\ldots,0)^T,
\end{equation}
where the non-zero entries are at the $u$-th and $v$-th positions, respectively, and $(\alpha_j^i)^2=(s_j)^i$. We use the notation that $(r_j^i)^{\ast}:=(\overline{r_j^i})^T$ for simplicity.

For each edge $e_j\in E^{or}$, we consider its signature $s_j$ as a random variable with values chosen randomly from $S_k^1$. All the $m$ random variables $s_1, \ldots, s_m$ are independent with possibly different distributions. In this viewpoint, the values $\{p_j^l\}_{l=0}^{k-1}$ in (\ref{eq:convexity coeff}) represent the distribution of $s_j$. Accordingly, the vectors $\{r^i_j\}_{j=1}^m$ are a set of independent finite-valued random column vectors in $\mathbb{C}^N$. Then, the polynomial (\ref{eq:partial convex comb}) is equal to the following expectation of characteristic polynomial:
\begin{equation}\label{eq:expectedChar}
 \mathbb{E}(f^{s,i})=\mathbb{E}(\det(xI-A^{s,i}))=\mathbb{E}\left(\det\left(xI+D-\sum_{j=1}^mr_j^i\cdot (r_j^i)^{\ast}\right)\right).
\end{equation}
If the graph $G$ is regular with vertex degree $d$, we have $D=dI$. Therefore $\mathbb{E}(f^{s,i})$ is the expectation of characteristic polynomials of a sum of independent rank one Hermitian matrices (with a shift of all roots by $-d$). In the terminology of \cite{MSS2}, the right hand side of (\ref{eq:expectedChar}) without the matrix $D$ is called the \emph{mixed characteristic polynomial} of the matrices
\begin{equation}
 A_j^i:=\mathbb{E}(r_j^i\cdot(r_j^i)^{\ast}), \,\,\,j=1,2,\ldots, m.
\end{equation}
Note that all the above matrices $A_j^i$ are positive semi-definite.
Then by \cite[Corollary 4.4]{MSS2}, the mixed characteristic polynomial of positive semi-definite matrices is real-rooted. This proves the real-rootedness of (\ref{eq:expectedChar}) in the regular case and hence the proposition.

In the case that $G$ is irregular, we can obtain the real-rootedness of (\ref{eq:expectedChar}) by modifying the arguments of \cite[Corollary 4.4]{MSS2}. For convenience, we outline the proof here. A proof similar to the one of \cite[Theorem 4.1]{MSS2} yields
\begin{align*}
&\mathbb{E}\left(\det\left(xI+x'D-\sum_{j=1}^mr_j^i\cdot (r_j^i)^{\ast}\right)\right)\\
=&\prod_{j=1}^{m}(1-\partial_{z_j})\det\left.\left(xI+x'D+\sum_{j=1}^mz_jA_j^i\right)\right|_{z_1=\cdots=z_m=0}.
\end{align*}
Therefore, we obtain
\begin{align*}
&\mathbb{E}(f^{s,i})=\mathbb{E}\left(\det\left(xI+D-\sum_{j=1}^mr_j^i\cdot (r_j^i)^{\ast}\right)\right)\\
=&\prod_{j=1}^{m}(1-\partial_{z_j})\det\left.\left(xI+x'D+\sum_{j=1}^mz_jA_j^i\right)\right|_{z_1=\cdots=z_m=0,x'=1}.
\end{align*}
Note that $\det(xI+x'D+\sum_{j=1}^mz_jA_j^i)$ is real stable by \cite[Proposition 3.6]{MSS2} and we conclude the real stability of $\mathbb{E}(f^{s,i})$ by \cite[Corollary 3.8 and Proposition 3.9]{MSS2}. Since real stability coincides with real rootedness in the case of univariate polynomials, we conclude that $\mathbb{E}(f^{s,i})$  is real-rooted. For more details, see \cite{MSS2}.
\end{proof}


\begin{Thm}\label{thm:tower3} Let $G$ be a finite connected bipartite graph. Then there exists a $3$-cyclic-lift $\widehat G$ of $G$ such that all its new eigenvalues lie in the Ramanujan interval $[-\rho(T),\rho(T)]$, where $\rho(T)$ is the spectral radius of the universal covering $T$ of $G$. In particular, when $G$ is $d$-regular, the interval is $[-2\sqrt{d-1}, 2\sqrt{d-1}]$.
\end{Thm}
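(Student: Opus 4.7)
The plan is to exploit the special feature of the case $k=3$ highlighted in the introduction: the new eigenvalues of a $3$-cyclic lift come in two groups that actually coincide, so controlling a single signed adjacency matrix suffices. Concretely, apply Lemma \ref{lemma:spectra of lifts} with $k=3$: the new eigenvalues of $\widehat A$ are the multiset union
\[
\sigma(A^{s,1}) \sqcup \sigma(A^{s,2}).
\]
By relation (\ref{equ:conjofhardamardpowers}) we have $A^{s,2}=\overline{A^{s,1}}$, so the spectrum of $A^{s,2}$ consists of the complex conjugates of the eigenvalues of $A^{s,1}$. Since both matrices are Hermitian, all their eigenvalues are already real, hence $\sigma(A^{s,1})=\sigma(A^{s,2})$ as multisets. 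Therefore, to place all new eigenvalues inside $[-\rho(T),\rho(T)]$, it suffices to find a single $3$-cyclic signature $s$ with
\[
\sigma(A^{s,1}) \subseteq [-\rho(T),\rho(T)].
\]

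Next I would combine Theorem \ref{thm:main} with the bipartite symmetry of Lemma \ref{lemma:symmetric}. Applying Theorem \ref{thm:main} to $G$ with $k=3$ and $i=1$ produces a signature $s_0^1: E^{or}\to S_3^1$ whose associated signed adjacency matrix satisfies
\[
\lambda_{\max}(A^{s_0^1,1}) \le \rho(T).
\]
Since $G$ is bipartite, Lemma \ref{lemma:symmetric} tells us that $\sigma(A^{s_0^1,1})$ is symmetric about $0$, so
\[
\lambda_{\min}(A^{s_0^1,1}) = -\lambda_{\max}(A^{s_0^1,1}) \ge -\rho(T).
\]
Combined with the previous bound this gives $\sigma(A^{s_0^1,1})\subseteq[-\rho(T),\rho(T)]$, and by the first paragraph the same holds for $\sigma(A^{s_0^1,2})$. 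Thus the $3$-cyclic lift $\widehat G$ of $G$ determined by $s_0^1$ has all its new eigenvalues in $[-\rho(T),\rho(T)]$. In the regular case, $\rho(T)=2\sqrt{d-1}$, giving the stated Ramanujan interval.

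There is no real obstacle in this argument once Theorem \ref{thm:main} is in hand; the content is already packaged by the preceding lemmas. The only point that deserves emphasis in the write-up is precisely why the argument is restricted to $k=3$: for $k\ge 4$ the matrices $A^{s,1},\ldots,A^{s,k-1}$ no longer pair up into conjugate pairs with matching spectra, so one would need a single signature simultaneously controlling the spectrum of every Hadamard power, which Theorem \ref{thm:main} does not deliver. For $k=3$ the pairing $A^{s,2}=\overline{A^{s,1}}$ is exactly what makes one application of Theorem \ref{thm:main} plus the bipartite reflection enough.
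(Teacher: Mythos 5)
Your proposal is correct and follows essentially the same route as the paper: Lemma \ref{lemma:spectra of lifts} identifies the new eigenvalues, the relation $A^{s,2}=\overline{A^{s,1}}$ together with Hermiticity collapses the two spectra, Theorem \ref{thm:main} (with $i=1$) bounds the top eigenvalue, and Lemma \ref{lemma:symmetric} supplies the lower bound via bipartite symmetry. Your closing remark on why the argument fails for $k\ge 4$ also matches the paper's own discussion.
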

\begin{proof}
 By Lemma \ref{lemma:spectra of lifts}, the new eigenvalues of the $3$-cyclic-lift $\widehat{G}$ are eigenvalues of either $A^{s,1}=A^s$ or $A^{s,2}$. From (\ref{equ:conjofhardamardpowers}) we know $A^{s,2}=\overline{A^s}$. Since $A^s$ is Hermitian, we obtain $\sigma(A^{s,2})=\sigma(A^s)$ for any choice of $s:E^{or}\rightarrow S_3^1$. Applying Theorem \ref{thm:main}, we can find an $s_0: E^{or}\rightarrow S_3^1$ such that
\begin{equation}
 \lambda_{max}(A^{s_0})\leq \rho(T).
\end{equation}
By Lemma \ref{lemma:symmetric}, $\sigma(A^s)$ is symmetric w.r.t. to zero since $G$ is bipartite. Therefore, we arrive at
\begin{equation}
 |\lambda_i(A^{s_0})|\leq \rho(T), \,\,\,\,\,|\lambda_i(A^{s_0,2})|\leq \rho(T), \,\,\text{for }i=1,2,\ldots, N.
\end{equation}
This proves the corollary.
\end{proof}

Starting from the complete bipartite graph $G_1:=K_{d,d}$, we can apply Theorem \ref{thm:tower3} repeatedly to obtain an infinite tower of $3$-cyclic lifts $\cdots\rightarrow G_k\rightarrow G_{k-1}\rightarrow G_{k-2}\rightarrow\cdots\rightarrow G_1$ with each $G_i$ being Ramanujan.

As we have commented in the Introduction, the above method of finding an infinite family of Ramanujan graphs does not work for $k$-lifts with $k \ge 4$. In this case, one needs to find a proper signature $s_0$ which works simultaneously for all $i\in \{1,2,\ldots,k-1\}$ in Theorem \ref{thm:main}.

\section*{Acknowledgements}
We thank Stefan Dantchev for bringing the reference \cite{AKM13} to our attention.
We acknowlege the support of the EPSRC Grant EP/K016687/1.


\begin{thebibliography}{99}
\bibitem{AR58} R. P. Abelson and M. J. Rosenberg, Symbolic psycho-logic: A model of attitudinal cognition, Behavioral Sci. 3 (1958), 1-13.
\bibitem{AKM13} N. Agarwal, A. Kolla, V. Madan, Small lifts of expander graphs are expanding, arXiv: 1311.3268, November 2013.
\bibitem{AmitLinial02} A. Amit, N. Linial, Random graph coverings I: General theory and graph connectivity, Combinatorica 22 (2002), no. 1, 1-18.
\bibitem{AtayLiu14} F. M. Atay, S. Liu, Cheeger constants, structural balance, and spectral clustering analysis for signed graphs, arXiv: 1411.3530, November 2014.
\bibitem{BL06} Y. Bilu, N. Linial, Lifts, discrepancy and nearly optimal spectral gap, Combinatorica 26 (2006), no. 5, 495-519.
\bibitem{CartHarary56} D. Cartwright, F. Harary, Structural balance: a generalization of Heider's theory, Psychol. Rev. 63 (1956), no. 5, 277-293.
\bibitem{Friedman03} J. Friedman, Relative expanders or weakly relatively Ramanujan graphs, Duke Math. J. 118 (2003), no. 1, 19-35.
\bibitem{Godsil81} C. D. Godsil, Matchings and walks in graphs, J. Graph Theory 5 (1981), no. 3, 285-297.
\bibitem{GG1981} C. D. Godsil, I. Gutman, On the matching polynomial of a graph, Algebraic methods in graph theory, Vol. I, II (Szeged, 1978), pp. 241-249, Colloq. Math. Soc. J\'{a}nos Bolyai, 25, North-Holland, Amsterdam-New York, 1981.
\bibitem{Gross74} J. L. Gross, Voltage graphs, Discrete Math. 9 (1974), 239-246.
\bibitem{GrossAlpert73} J. L. Gross, S. R. Alpert, Branched coverings of graph imbeddings, Bull. Amer. Math. Soc. 79 (1973), 942-945.
\bibitem{GrossAlpert74} J. L. Gross, S. R. Alpert, The topological theory of current graphs, J. Combinatorial Theory Ser. B 17 (1974), 218-233.
\bibitem{GrossTucker74} J. L. Gross, T. W. Tucker, Quotients of complete graphs: revisiting the Heawood map-coloring problem, Pacific J. Math. 55 (1974), 391-402.
\bibitem{GrossTucker77} J. L. Gross, T. W. Tucker, Generating all graph coverings by permutation voltage assignments, Discrete Math. 18 (1977), no. 3, 273-283.
\bibitem{Gustin62} W. Gustin, Orientable embedding of Cayley graphs, Bull. Amer. Math. Soc. 69 (1963), 272-275.
\bibitem{Harary53} F. Harary, On the notion of balance of a signed graph, Michigan Math. J. 2 (1953), no. 2, 143-146.
\bibitem{Harary57} F. Harary, Structural duality, Behavioral Sci. 2 (1957), no. 4, 255-265.
\bibitem{Harary59} F. Harary, On the measurement of structural balance, Behavioral Sci. 4 (1959), 316-323.
\bibitem{Heawood1890} P. J. Heawood, Map colour theorem, Quart. J. Math. Oxford Ser. 2 24 (1890),332-338.
\bibitem{HeilmannLieb72} O. J. Heilmann, E. H. Lieb, Theory of monomer-dimer systems, Comm. Math. Phys. 25 (1972), 190-232.
\bibitem{HLW06}S. Hoory, N. Linial, A. Wigderson, Expander graphs and their applications, Bull. Amer. Math. Soc. 43 (2006), no. 4, 439-561.
\bibitem{LLPP} C. Lange, S. Liu, N. Peyerimhoff, O. Post, Cheeger inequalities for discrete magnetic Laplacians with cyclic multipliers, in preparation.
\bibitem{MSS} A. W. Marcus, D. A. Spielman, N. Srivastava, Interlacing families I: Bipartite Ramanujan graphs of all degrees, Proceedings of FOCS, 529-537, 2013;  to appear in Ann. of Math..
\bibitem{MSS2} A. W. Marcus, D. A. Spielman, N. Srivastava, Interlacing families II: Mixed characteristic polynomials and the Kadison-Singer Problem, arXiv: 1306.3969, to appear in Ann. of Math..
\bibitem{MSSICM} A. W. Marcus, D. A. Spielman, N. Srivastava, Ramanujan graphs and the solution of the Kadison-Singer problem, arXiv: 1408.4421, to appear in Proceedings of ICM 2014.
\bibitem{Puder14} D. Puder, Expansion of random graphs: new proofs, new results, Invent. Math., online first, December 2014, DOI: 10.1007/s00222-014-0560-x.
\bibitem{Ringel74} G. Ringel, Map color theorem, Die Grundlehren der mathematischen Wissenschaften in Einzeldarstellungen, Band 209, Springer-Verlag, New York-Heidelberg, 1974.
\bibitem{RingelYoungs68} G. Ringel, J. W. T. Youngs, Solution of the Heawood map-coloring problem, Proc. Nat. Acad. Sci. U.S.A. 60 (1968), 438-445.
\bibitem{Shubin94} M. A. Shubin, Discrete magnetic Laplacian, Comm. Math. Phys. 164 (1994), no. 2, 259-275.
\bibitem{Sunada93} T. Sunada, A discrete analogue of periodic magnetic Schr\"{o}dinger operators, Geometry of the spectrum (Seattle, WA, 1993), 283-299, Contemp. Math., 173, Amer. Math. Soc., Providence, RI, 1994.
\bibitem{Youngs67} J. W. T. Youngs, The Heawood Map Coloring Conjecture, Chapter 12 in Graph Theory and Theoretical Physics (F. Harary, ed.), pp. 313-354, Academic Press, London, 1967.
\bibitem{Zaslavsky82} T. Zaslavsky, Signed graphs, Discrete Appl. Math. 4 (1982), no. 1, 47-74.
\bibitem{ZaslavskyMatrices} T. Zaslavsky, Matrices in the theory of signed simple graphs, Advances in discrete mathematics and applications: Mysore, 2008, 207-229, Ramanujan Math. Soc. Lect. Notes Ser., 13, Ramanujan Math. Soc., Mysore, 2010.




\end{thebibliography}
\end{document}